\documentclass[11pt]  {amsart} 
\usepackage{amsmath,latexsym,amssymb,amsmath,
amscd,amsthm,amsxtra}

\usepackage{color}

\numberwithin{equation}{section}

\textwidth 16.00cm
\textheight 21cm
\topmargin 0.0cm
\oddsidemargin 0.5cm
\evensidemargin 0.5cm
\parskip 0.0cm

\newtheorem{theorem}{Theorem}[section]
\newtheorem{proposition}{Proposition}[section]
\newtheorem{lemma}{Lemma}[section]
\newtheorem{corollary}{Corollary}[section]

\newtheorem{remark}{\textbf{Remark}}[section]

\def\tr{\mathrm{tr}}
\def\o{\omega}
\def\p{\partial}

\def\a{\alpha}

\def\g{\gamma}
\def\d{\delta}
\def\l{\lambda}
\def\s{\sigma}

\def\n{\nabla}
\def\SS{{\mathbb S}}
\def\<{\langle}
\def\>{\rangle}

\def\ep{\epsilon}
\def\ep{\epsilon}
\def\beq{\begin{eqnarray}}
\def\eeq{\begin{eqnarray}}

\begin{document}

\title[{$L^p$ Christoffel-Minkowski problem:  the case $1< p<k+1$}]
{$L^p$ Christoffel-Minkowski problem: \\ the case $1< p<k+1$}
\author{Pengfei Guan and Chao Xia}

\address{Department of Mathematics and Statistics\\
McGill University\\
Montreal, H3A 0B9, Canada}
\email{pengfei.guan@mcgill.ca}

\address{School of Mathematical Sciences\\
Xiamen University\\
361005, Xiamen, P.R. China}
\email{chaoxia@xmu.edu.cn}
\thanks{Research of the first author is supported in part by an NSERC Discovery grant, the research of the second author is  supported in part by NSFC (Grant No. 11501480) and  the Natural Science Foundation of Fujian Province of China (Grant No. 2017J06003). Part of this work was done while CX was visiting the department of mathematics and statistics at McGill University. He would like to thank the department for its hospitality}

\date{}


\begin{abstract}{}
We consider a fully nonlinear partial differential equation associated to the intermediate $L^p$ Christoffel-Minkowski problem in the case $1<p<k+1$. We establish the existence of convex body with prescribed $k$-th even $p$-area measure on $\mathbb S^n$, under an appropriate assumption on the prescribed function. We construct examples to indicate certain geometric condition on the prescribed function is needed for the existence of smooth strictly convex body. We also obtain $C^{1,1}$ regularity estimates for admissible solutions of the equation when $ p\ge \frac{k+1}2$.

\end{abstract}



\medskip

\maketitle

\section{Introduction}

Convex geometry plays important role in the development of fully nonlinear partial differential equations. The classical Minkowski problem and the Christoffel-Minkowski problem in general, are beautiful examples of such interactions (e.g., \cite{Nirenberg, P52, CY, P78, B, F, GM}). The core of convex geometry is the Brunn-Minkowski theory, the Minkowski sum, the mixed volumes, curvature and area measures are fundamental concepts.
The notion of the Minkowski sum was extended by Firey \cite{Fir} , he introduced the so-called $p$-sum ($p>1$) for convex bodies. Lutwak \cite{Lut} further developed a corresponding Brunn-Minkowski-Firey theory based on Firey's $p$-sums. Lutwak initiated the study of the Minkowski problem for $p$-sums and established the uniqueness of the problem, along with the existence in the even case. The regularity of the solution in the even case was proved subsequently by Lutwak-Oliker \cite{LO}. Chou-Wang \cite{CW} and Guan-Lin \cite{GL} studied this problem from the PDE point of view, extensive study was carried out by Lutwak-Yang-Zhang in a series of papers, we refer \cite{LYZ, BLYZ} for further references in this direction.

This paper concerns the intermediate Christoffel-Minkowski problem related to $p$-sums, which we callit the $L^p$-Christoffel-Minkowski problem. While the $L^p$-Minkwoski problem corresponds to a Monge-Amp\`ere type equations, the $L^p$-Christoffel-Minkowski problem corresponds to a fully nonlinear partial differential equation of Hessian type. 

For a convex body $K$ in $\mathbb{R}^{n+1}$, we denote by $h(K, \cdot)$ its support function. For any $p\geq 1$, the $p$-sum of two convex bodies $K$ and $L$, $K+_p L$,  is defined through its support function,
\begin{eqnarray*}
h^p(\l_1 K+\l_2 L,\cdot)=\l_1 h^p(K, \cdot)+\l_2 h^p (L, \cdot), \quad \l_1, \l_2\in \mathbb{R}_+.
\end{eqnarray*}
The mixed $p$-quermassintegrals for $K$ and $L$ are defined by
$$W_{p,k}(K, L)=\lim_{\ep\to 0}\frac{W_k(K+_p \ep L)-W_k(K)}{\ep}, \quad p\ge1, 1\le k\le n.$$
Here $W_k(K)$ is the usual quermassintegral for $K$.
It was shown by Lutwak \cite{Lut} that $W_{p,k}(K,L)$ has the following integral representation:
\begin{eqnarray*}
W_{p,k}(K,L)=\frac{1}{n+1}\int_{\mathbb{S}^n} h(L, x)^ph(K, x)^{1-p} dS_{k}(K, x),
\end{eqnarray*}
where $dS_{k}(K,\cdot)$ is the $k$-th surface area measure of $K$. Thus $h(K, x)^{1-p} dS_{k}(K, x)$ is the local version of the  mixed $p$-quermassintegral. We call it  $k$-th $p$-area measure. When $p=1$, it reduces to the usual $k$-th area measures.

If $K$ is a convex body with $C^2$ boundary and support function $h$,  then
\begin{eqnarray*}
dS_{k}(K, \cdot)=\s_{n-k}(\n^2 h+h g_{\mathbb{S}^n})d\mu_{\mathbb{S}^n}.
\end{eqnarray*}
Here $\n^2 h$ is the Hessian on $\mathbb{S}^n$, $\s_{n-k}$ is the $(n-k)$-th elementary symmetric function. Therefore, to solve the Minkowski problem for $p$-sum is equivalent to solve the following PDE:\begin{eqnarray}\label{MAn}
\s_{n}(\n^2 u+u g_{\mathbb{S}^n})=u^{p_0}f \hbox{ on }\mathbb{S}^n,
\end{eqnarray}
where $p_0=p-1$.

After the development of $L^p$-Minkowski problem, it is natural to consider the $L^p$-Christoffel-Minkowski problem, i.e., the problem of prescribing the $k$-th $p$-area measure for general $1\leq k\leq n-1$ and $p\geq 1$.
As before, this problem can be reduced to the following nonlinear PDE:
 \begin{eqnarray}\label{CMpk}
\s_{k}(\n^2 u+u g_{\mathbb{S}^n})=u^{p_0}f\hbox{ on }\mathbb{S}^n.
\end{eqnarray}

A solution $u$ to \eqref{CMpk}  is called admissible if $(\n^2u+u g_{\mathbb{S}^n})\in \Gamma_k$ and $u$ is (strictly) spherically convex if $(\n^2u+u g_{\mathbb{S}^n})\ge 0$  $(>0)$.
For $k<n$ and $p_0=0$, the above is exactly the equation for the intermediate Christoffel-Minkowski problem of prescribing $k$-th area measures. Note that admissible solutions to equation (\ref{CMpk}) is not necessary a geometric solution to $L^p$-Christoffel-Minkowski problem if $k<n$. As in the classical Christoffel-Minkowski problem \cite{GM}, one needs to deal with the convexity of the solutions of (\ref{CMpk}).
Under a sufficient condition on the prescribed function, Guan-Ma \cite{GM} proved the existence of a unique convex solution. The key tool to handle the convexity is the constant rank theorem for fully nonlinear partial differential equations.
Equation (\ref{CMpk}) has been studied by Hu-Ma-Shen \cite{HMS} in the case $p_0\geq k$. In this case, there is a uniform lower bound for solutions if $f>0$ and they
proved the existence of convex solutions to \eqref{CMpk} under some appropriate sufficient condition.   The case $0<p_0<k$  is different, equation (\ref{CMpk}) is degenerate even for $f>0$ as there is no uniform lower bound for solutions in general.

The focus of this paper is to address two questions regarding equation (\ref{CMpk}) when $0<p_0<k$. \begin{enumerate}\item  When does there exist a {\it smooth convex} solution? \item Regularity of general {\it admissible solutions} of equation (\ref{CMpk}). \end{enumerate}

Our first result is the following.

\begin{theorem}\label{thm1}
Let $1\le k\le n-1$ be an integer and $0<p_0<k$ be a real number. For any positive {\em even}  function $f\in C^l(\SS^n)$ $(l\ge 2)$ satisfying
\begin{eqnarray}\label{convexity}
(\n^2 f^{-\frac{1}{k+p_0}}+f^{-\frac{1}{k+p_0}} g_{\mathbb{S}^n})\geq 0,
\end{eqnarray}
 there is a unique  {\em even, strictly spherically convex}  solution $u$ of the equation \eqref{CMpk}. Moreover, for each $\a\in (0,1)$, there is some constant $C$, depending on  $n, k, p_0, l,  \a, \min f$ and $\|f\|_{C^l(\SS^n)}$, such that
\begin{eqnarray}\label{esti}
\|u\|_{C^{l+1,\a}(\SS^n)}\leq C.
\end{eqnarray}
\end{theorem}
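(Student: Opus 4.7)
My plan is to establish the theorem by the continuity method applied to the family
\begin{equation*}
\sigma_k(\n^2 u_t + u_t g_{\SS^n}) = u_t^{p_0} f_t \quad \text{on } \SS^n, \quad t \in [0,1],
\end{equation*}
in which $f_0$ is a positive constant (so $u_0$ is a constant) and $f_1 = f$, with every $f_t$ positive, even, and satisfying \eqref{convexity}; such a deformation exists because hypothesis \eqref{convexity} is preserved by convex combination with a sufficiently large positive constant. Openness at a strictly spherically convex admissible $u$ is analyzed through the linearization
\begin{equation*}
L\phi = \sigma_k^{ij}(\n^2 u + u g_{\SS^n})(\n_{ij}\phi + \phi \delta_{ij}) - p_0 u^{p_0-1} f \phi.
\end{equation*}
Using $\sigma_k^{ij}(W) W_{ij} = k \sigma_k(W)$, one computes $Lu = (k-p_0) u^{p_0} f > 0$, and since $p_0 < k$ a standard maximum-principle comparison argument forces the kernel of $L$ on the space of even functions to be trivial, so openness follows from the Fredholm alternative.

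For closedness I need uniform $C^{l+1,\alpha}$ estimates along the path, and the hierarchy $C^0 \to C^1 \to C^2 \to C^{l+1,\alpha}$ has a clear bottleneck. Pinching at the extrema of $u$ yields only the partial bounds
\begin{equation*}
\max_{\SS^n} u \geq \left(\tfrac{\min f}{\binom{n}{k}}\right)^{\frac{1}{k-p_0}}, \qquad \min_{\SS^n} u \leq \left(\tfrac{\max f}{\binom{n}{k}}\right)^{\frac{1}{k-p_0}},
\end{equation*}
which do not rule out collapse of the associated convex body $K$ to a lower-dimensional set nor its unbounded elongation. Promoting these to two-sided uniform bounds is the main obstacle and the step where evenness of $f$ is indispensable: it forces $K$ to be origin-symmetric, eliminating the translation freedom. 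The plan here is to combine the extremum pinching with an integral identity obtained from divergence-freeness of the Newton tensor $\sigma_k^{ij}(W)$ — relating $\int_{\SS^n} u^{p_0+1} f \, d\mu$ to the mixed volume $V(K[k+1], B[n-k])$ — and then to invoke an $L^p$-Minkowski-Firey inequality restricted to origin-symmetric bodies in order to exclude both degenerations. Once $c \leq u \leq C$ is in hand, the $C^1$ estimate is automatic from spherical convexity, and the $C^2$ estimate follows by the standard maximum-principle argument for $\sigma_k$ Hessian equations on $\SS^n$, applied to an auxiliary function built from $\lambda_{\max}(\n^2 u + u g_{\SS^n})$ with multipliers in $u$ and $|\n u|^2$; Evans-Krylov plus Schauder theory then yield \eqref{esti}.

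Strict spherical convexity of $u$ is preserved along the continuity path by the constant rank theorem of Guan-Ma \cite{GM}: condition \eqref{convexity} is precisely the algebraic hypothesis needed, with the exponent $k+p_0$ reflecting the combined scaling of $\sigma_k(W)$ and $u^{p_0}$ in the equation. Uniqueness of the even strictly convex solution follows from the $L^p$-Brunn-Minkowski-Firey inequality for origin-symmetric bodies: two distinct solutions would realize equality in what is a strict inequality in the non-homothetic case. The entire argument pivots on the $C^0$ bound, which has no analog outside the even setting because the degeneracy as $u \to 0$ (unavoidable when $0 < p_0 < k$) cannot be controlled by the centroid-type device used in the classical Christoffel-Minkowski problem.
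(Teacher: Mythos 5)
Your overall framework (continuity method, constant rank theorem to preserve strict convexity, Alexandrov--Fenchel-based uniqueness, hierarchy of estimates with the $C^0$ bound as the bottleneck) matches the paper's. But there is a genuine gap at precisely the step you have identified as the main obstacle, and a secondary issue in the openness argument.

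The central gap is the positive lower bound on $u$. You propose to obtain it from the divergence-free Newton tensor identity $\int_{\SS^n} u^{p_0+1}f = \frac{k+1}{n-k}\int_{\SS^n}\sigma_{k+1}(W_u)$ together with an $L^p$-Brunn--Minkowski--Firey inequality for origin-symmetric bodies, ``to exclude both degenerations.'' For $k=n$ this kind of argument does work (volume control via $\int u^{p_0+1}f$), but for $k<n$ it does not: a lower bound on $V_{k+1}$ does not prevent the associated body from collapsing to a lower-dimensional set. The paper explicitly points this out (``the evenness assumption does not directly yield the lower bound of $u$ when $k<n$ as we do not have direct control of the volume of the associated convex body'') and instead obtains the lower bound by a completely different device: a refined interior-type gradient estimate $|\n u|^2 \le A\,M_u^{2-\gamma}(u-m_u)^\gamma$ established for admissible solutions (Proposition~3.1), fed into an ODE argument along the geodesic joining the max and min points of the even, spherically convex $u$ (Proposition~4.1). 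Your sketch does not contain this gradient estimate or any substitute for it, and the mixed-volume route you suggest is the one the authors say is insufficient in this regime. Note also that the auxiliary gradient estimate must be derived \emph{before} the $C^0$ lower bound (it is what makes the Harnack-type step and the ODE argument work), so the remark that $C^1$ follows ``automatically from spherical convexity'' once $c\le u\le C$ is true but does not address where the lower bound itself comes from.

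A secondary issue: you assert that since $Lu=(k-p_0)u^{p_0}f>0$ a ``standard maximum-principle comparison argument'' gives triviality of the kernel of $L$ on even functions. The operator $L$ has a zeroth-order coefficient $\sum_i\sigma_k^{ii}-p_0u^{p_0-1}f$ of no definite sign, and having a positive strict \emph{subsolution} $u$ (rather than a positive strict supersolution) does not yield a maximum principle. In particular the comparison $v:=t_0u-\phi\ge 0$ with $v(x_0)=0$ gives only $Lv(x_0)\ge 0$, consistent with $Lv>0$ and producing no contradiction. The paper's actual argument is a two-fold integration against $u$ and against $v$, using the divergence-free structure to get $\int v\sigma_k(W_u)=0$ and then Alexandrov--Fenchel to conclude $V(v,v,u,\dots,u)\le 0$, which together with $p_0>0$ forces $v\equiv 0$. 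You should replace the maximum-principle claim with this (or another correct) argument; as stated it is not a valid proof of invertibility of the linearization.
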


An immediate consequence of the previous theorem is the following existence result for the $L^p$ Christoffel-Minkowski problem for the case $1<p< k+1$.
\begin{corollary}\label{thm2}
Let $1\le k\le n-1$ be an integer and $1<p<k+1$ be a real number. For any positive {\em even} function $f\in C^l(\SS^n)$ $(l\ge 2)$ satisfying
\begin{eqnarray}\label{convexity0}
(\n^2 f^{-\frac{1}{k+p-1}}+f^{-\frac{1}{k+p-1}} g_{\mathbb{S}^n})\geq 0,
\end{eqnarray}
 there is a unique closed strictly convex hypersurface $M$ in $\mathbb{R}^{n+1}$ of class $C^{l+1,\a}$ (for all $0<\a<1$) such that the $(n-k)$-th $p$-area measure of $M$ is $fd\mu_{\SS^n}$.
 \end{corollary}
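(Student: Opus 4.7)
The plan is to translate the PDE solution provided by Theorem~\ref{thm1} into a geometric statement via support functions; the corollary is essentially a dictionary-translation of Theorem~\ref{thm1}, with the equation \eqref{CMpk} encoding exactly the prescribed $p$-area measure condition.

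First, with $p_0=p-1$ the hypothesis \eqref{convexity0} coincides with \eqref{convexity}, so Theorem~\ref{thm1} applies and produces an even, strictly spherically convex solution $u \in C^{l+1,\a}(\SS^n)$ of \eqref{CMpk}. Since $u>0$ and $\n^2 u + u g_{\SS^n}>0$, the standard Minkowski correspondence between positive spherically convex functions and convex bodies yields a closed strictly convex body $K \subset \r$ with $h(K,\cdot)=u$. The evenness of $u$ forces origin-symmetry of $K$, and the regularity $u \in C^{l+1,\a}$ together with strict convexity transfers to $M=\p K$ via the inverse Gauss map, giving a hypersurface of class $C^{l+1,\a}$.

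Second, I read off the prescribed measure. The introduction records $dS_k(K,\cdot) = \s_{n-k}(\n^2 h + h g_{\SS^n})\,d\mu_{\SS^n}$; re-indexing $k \leftrightarrow n-k$ and substituting \eqref{CMpk} yields
\begin{equation*}
dS_{n-k}(K,\cdot) = \s_k(\n^2 u + u g_{\SS^n})\,d\mu_{\SS^n} = u^{p-1}f\,d\mu_{\SS^n}.
\end{equation*}
Hence the $(n-k)$-th $p$-area measure of $M$ is $h^{1-p}\,dS_{n-k}(K,\cdot) = u^{1-p}\cdot u^{p-1}f\,d\mu_{\SS^n} = f\,d\mu_{\SS^n}$, as required. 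For uniqueness, any other closed strictly convex $M'$ realizing the same prescription, being origin-symmetric because $f$ is even, has an even strictly spherically convex support function which is likewise a solution of \eqref{CMpk}; the uniqueness clause of Theorem~\ref{thm1} then forces $M'=M$.

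Given Theorem~\ref{thm1}, there is no substantive obstacle. The only bookkeeping point is the index convention: the PDE \eqref{CMpk} carries $\s_k$, whereas the corollary prescribes the $(n-k)$-th $p$-area measure, and the matching is exactly the paper's convention $dS_k = \s_{n-k}(\n^2 h + h g_{\SS^n})\,d\mu_{\SS^n}$.
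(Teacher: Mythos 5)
Your translation is exactly the intended one: the paper offers no separate argument for Corollary \ref{thm2} and treats it as immediate from Theorem \ref{thm1} via the support-function dictionary, and your bookkeeping is right on the two points that matter -- the identification $p_0=p-1$ so that \eqref{convexity0} is \eqref{convexity}, and the index convention $dS_{n-k}(K,\cdot)=\s_k(\n^2 u+u g_{\mathbb{S}^n})d\mu_{\SS^n}$, so that the $(n-k)$-th $p$-area measure is $u^{1-p}\s_k(W_u)\,d\mu_{\SS^n}=f\,d\mu_{\SS^n}$. The regularity transfer ($u\in C^{l+1,\a}$ with $W_u>0$ gives $M\in C^{l+1,\a}$ through the inverse Gauss map $x\mapsto \n u(x)+u(x)x$) is also the standard and correct step.

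The one step that does not stand as written is in the uniqueness argument: you assert that a competitor $M'$ is origin-symmetric ``because $f$ is even.'' That implication is not available a priori -- symmetry of the solution is a consequence of uniqueness, not an input to it -- so, if you only invoke the uniqueness clause of Theorem \ref{thm1}, which is stated among \emph{even} solutions, the argument as phrased is circular. The gap is easily closed with what the paper actually proves: in the proof of Theorem \ref{thm1}, uniqueness is established for arbitrary positive admissible solutions of \eqref{CMpk} (via the Alexandrov--Fenchel inequality \eqref{AF0} and H\"older), with no evenness hypothesis, and Lutwak's uniqueness for strictly spherically convex solutions does the same. Hence the support function of $M'$ -- which is positive, since the $p$-area measure $h^{1-p}dS_{n-k}$ only makes sense when the origin lies inside $M'$ -- is a positive spherically convex solution of \eqref{CMpk} and therefore equals $u$ directly; no symmetry of $M'$ needs to be assumed (its evenness then follows).
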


This is an analogue result of Lutwak-Oliker \cite{LO}. We use method of continuity to prove Theorem \ref{thm1}. The strictly convexity can be preserved along the continuity method by the constant rank theorem as in \cite{GM, HMS}. Unlike the case $p\ge k+1$, the lower bound of $u$ is not true in general if $p<k+1$. The crucial step is to show a uniform positive lower bound for $u$ under evenness assumption.  In contrast to the $L^p$-Minkowski problem \cite{LO}, the evenness assumption does not directly yield the lower bound of $u$ when $k<n$ as we do not have direct control of the volume of the associated convex body. The most technical part in this paper is to obtain a refined gradient estimate Proposition \ref{gradient} and to use it to prove Proposition \ref{lowerbound} with the assumption of evenness and spherical convexity of $u$.
One would like to ask that would condition (\ref{convexity}) guarantee the positivity of $u$?
We will exhibit some examples in section 5 to indicate that condition (\ref{convexity}) is not sufficient (see Proposition \ref{thm3}).

As in the case of the $L^p$-Minkowski problem \cite{GL}, one has $C^2$ estimate if $p_0\ge \frac{k-1}{2}$.

\begin{theorem}\label{thm4'}
Let $1\le k\le n-1$ be an integer and  $\frac{k-1}{2}\le p_0<k$  be a real number. 
For any positive function $f\in C^2(\SS^n)$ there exists a solution $u$ to \eqref{CMpk}  with $(\n^2 u+ug_{\SS^n})\in \bar \Gamma_k$. Moreover,
\begin{eqnarray*}
\|u\|_{C^{1,1}(\SS^n)}\leq C.
\end{eqnarray*}
where $C$ depends on $n, k, p_0, \|f\|_{C^2(\SS^n)}$ and $\min_{\SS^n} f$. Furthermore, solution is $C^2$ continuous (i.e., $\nabla^2 u$ is continuous) if $p_0>\frac{k-1}2$.
 \end{theorem}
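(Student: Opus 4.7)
The plan is to adapt the approach of Guan--Lin \cite{GL}, developed for the $L^p$-Minkowski case ($k=n$), to the intermediate Hessian equation \eqref{CMpk}. To handle the degeneracy at $u=0$ in the regime $p_0<k$, I would first solve the regularized family
\begin{equation*}
\sigma_k(\nabla^2 u_\epsilon + u_\epsilon g_{\SS^n}) = (u_\epsilon + \epsilon)^{p_0} f, \qquad \epsilon>0,
\end{equation*}
within the admissible cone $\nabla^2 u_\epsilon + u_\epsilon g_{\SS^n}\in\bar\Gamma_k$. The right-hand side is now bounded below by $\epsilon^{p_0}\min f>0$, so for each fixed $\epsilon$ an admissible $u_\epsilon$ can be produced by the continuity method or by topological degree, using a constant-coefficient case as a starting point. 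The goal is to derive $C^{1,1}$ estimates uniform in $\epsilon$ and pass to the limit $\epsilon\to 0$.

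A $C^0$ upper estimate on $u_\epsilon$ can be derived by integrating the equation and using a Minkowski-type identity on $\SS^n$ relating $\int u\,\sigma_k(W)$ to $\int\sigma_{k-1}(W)(u^2+|\nabla u|^2)$, once the translation kernel $\{a\cdot x\}$ of the operator $W$ is pinned down by a suitable normalization. A $C^1$ bound then follows by maximum-principle arguments applied to $|\nabla u_\epsilon|^2+Au_\epsilon^2$ after differentiating the equation once.

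The crucial step and the main obstacle is the uniform $C^{1,1}$ estimate, i.e., an upper bound on $\lambda_{\max}(\nabla^2 u_\epsilon+u_\epsilon g_{\SS^n})$. Writing the equation in logarithmic form $\log\sigma_k(W)=p_0\log u+\log f$ and following \cite{GL}, I would consider an auxiliary function of the shape
\begin{equation*}
\Phi(x) = \log W_{\xi\xi}(x) + \phi(|\nabla u|^2) + \psi(u),
\end{equation*}
with $\xi$ a direction realizing $\lambda_{\max}(W)$ at a candidate maximum point. Computing $F^{ii}\Phi_{ii}$ and invoking concavity of $\log\sigma_k^{1/k}$, the commutation formula on $\SS^n$, and the Newton--MacLaurin inequalities produces a bad term proportional to $p_0|\nabla u|^2/u^2$ coming from the second derivative of $p_0\log u$. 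The hard part, and the reason for the hypothesis $p_0\ge(k-1)/2$, is absorbing this bad gradient term into the good second-order contribution $F^{ii}W_{ii,\xi}^2/W_{\xi\xi}$; a suitable choice of $\phi$ and $\psi$ closes the estimate precisely at this threshold, matching the $(n-1)/2$ constant found by Guan--Lin in the Monge--Amp\`ere case $k=n$.

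With the uniform $C^{1,1}$ bound in hand, Arzel\`a--Ascoli produces a limit $u\in C^{1,1}(\SS^n)$ with $\nabla^2 u+ug_{\SS^n}\in\bar\Gamma_k$ solving \eqref{CMpk}. When $p_0>(k-1)/2$ the absorption above is strict, so on the open set $\{u>0\}$ the equation is uniformly elliptic and Evans--Krylov gives $C^{2,\alpha}$ regularity there; a separate analysis at points where $u=0$ then shows that $\nabla^2 u$ extends continuously across the zero set, yielding the final $C^2$ statement.
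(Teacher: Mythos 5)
Your overall scaffolding (regularize, prove estimates uniform in $\epsilon$, pass to the limit) is the same as the paper's, but two of your steps are not what carries the proof, and the central one is left unproved. On the setup: the paper perturbs the operator, solving $\sigma_k(\nabla^2 u+(u+\epsilon)g_{\SS^n})=u^{p_0}f$, precisely because at a minimum point this forces $u\ge c_\epsilon>0$ for each fixed $\epsilon$, so the approximate problem is nondegenerate and can be solved (by degree theory; openness by continuity is only checked at $u\equiv 1$). Your regularization $(u_\epsilon+\epsilon)^{p_0}f$ on the right-hand side does not force $u_\epsilon$ away from $0$, so the approximate problems remain degenerate and the claimed existence "by continuity or degree" is not automatic. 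Also, the $C^0$ upper bound is not obtained by a normalization pinning down the translation kernel (there is none to pin: $p_0>0$ breaks translation invariance); it comes from the Alexandrov--Fenchel iteration $\int u^{p_0+1}f=\frac{k+1}{n-k}\int\sigma_{k+1}(W)\le C\bigl(\int\sigma_k(W)\bigr)^{\frac{k+1}{k}}$ combined with a Harnack-type estimate near the maximum point, which in turn requires the refined gradient estimate $|\nabla u|^2\le A\,|u-m_u|^{\gamma}M_u^{2-\gamma}$; a crude bound on $|\nabla u|^2+Au^2$ does not substitute for this.

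The genuine gap is the uniform $C^{1,1}$ estimate, which you reduce to the assertion that a test function $\log W_{\xi\xi}+\phi(|\nabla u|^2)+\psi(u)$ lets the bad term $p_0|\nabla u|^2/u^2$ be absorbed into $F^{ii}W_{ii\xi}^2/W_{\xi\xi}$ "precisely at this threshold"; no choice of $\phi,\psi$ is exhibited and the mechanism is wrong on two counts. First, plain concavity of $\sigma_k^{1/k}$ (or $\log\sigma_k$) cannot see the number $\frac{k-1}{2}$: in the paper the threshold comes from the sharper Guan--Li--Li inequality (Lemma 2.2), which retains a quantitative lower bound for $-\sigma_k^{ij,lm}\nabla_sW_{ij}\nabla_sW_{lm}$ in terms of $\nabla_s\sigma_k/\sigma_k$ and $\nabla_s\sigma_1/\sigma_1$; evaluated at the maximum of $\sigma_1(W^\epsilon)/u^{\alpha}$ (where $\nabla\sigma_1=\alpha\sigma_1\nabla u/u$) this produces the coefficient $\beta=(p_0-\alpha)\frac{(k-2)p_0+k\alpha}{k-1}$, and the condition $p_0\ge\frac{k-1}{2}$ is exactly what allows an $\alpha\ge 0$ making $p_0-k\alpha$ and the combination with $p_0(p_0-1)-\beta$ nonnegative. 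Second, the bad gradient term is not absorbed by third-order good terms at all; it is controlled by a separate two-maximum-point comparison: the maximum of $|\nabla u|^2/u^{1+\alpha}$ is shown, via an eigenvalue argument using $W^\epsilon\in\Gamma_2$, to be dominated by the maximum of $\sigma_1(W^\epsilon)/u^{\alpha}$, which converts $|\nabla u|^2/u^2$ into $\sigma_1/u$ at the relevant point. Finally, your $C^2$-continuity claim needs more than a bound on $\lambda_{\max}(W)$: the paper proves the weighted estimate $|\nabla^2 u|\le Cu^{\alpha}$ with $\alpha>0$ when $p_0>\frac{k-1}{2}$, and it is this weight that makes $\nabla^2u$ continuous across $\{u=0\}$; uniform ellipticity and Evans--Krylov on $\{u>0\}$ hold already at $p_0=\frac{k-1}{2}$ and give nothing at the zero set, while your "separate analysis at points where $u=0$" is not supplied.
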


From next section on, the range for $p_0$ is $0<p_0<k$ unless otherwise specified.


\medskip

\section{Preliminaries}

We recall the basic notations.

Let $\s_k(A)$ be the $k$-th elementary symmetric function defined on the set $\mathcal{M}_{n}$ of  symmetric $n\times n$ matrices  and
$\s_k(A_1, \cdots, A_k)$ be the complete polarization of $\s_k$ for $A_i\in \mathcal{M}_{n},  i=1,\cdots, k$, i.e.
\begin{eqnarray*}
\s_k(A_1, \cdots, A_k)=\frac{1}{k!}\sum_{\substack{i_1,\cdots i_{k}=1,\\ j_1,\cdots,j_{k}=1}}^n\delta_{j_1\cdots j_k}^{i_1\cdots i_k}A_{1_{i_1j_1}}\cdots A_{k_{i_kj_k}}.
\end{eqnarray*}
Let $\Gamma_k$ be  Garding's cone $$\Gamma_k=\{A\in\mathcal{M}_{n}: \s_i(A)>0 \hbox{ for }i=1,\cdots, k \}.$$

Let $(\SS^n, g_{\mathbb{S}^n})$ be the unit round $n$-sphere and $\nabla$ be the covariant derivative on $\SS^n$. For a  function $u\in C^2(\mathbb{S}^n)$, we denote  by $W_u$  the matrix $$W_u:=\n^2 u+u g_{\mathbb{S}^n}.$$ In the case $W_u$ is positive definite, the eigenvalue of $W_u$ represents the principal radii of a  strictly convex hypersurface with  support function $u$.

Let $u^i \in C^2(\mathbb{S}^n)$, $i=1,\cdots, n+1.$
Set
$$V(u^1, u^2, \cdots, u^{n+1}):=\int_{\SS^n} u^1\s_n( W_{u^2}, \cdots, W_{u^{n+1}})d\mu_{\SS^n}, $$
$$V_{k+1}(u^1, u^2, \cdots, u^{k+1}):=V(u^1, u^2, \cdots, u^{k+1}, 1, \cdots, 1).$$
We collect the following properties which have been proved in \cite{GMTZ}.

\begin{lemma}[\cite{GMTZ}]\label{lemma}

\

 {\rm (1)} $V_{k}(u^1, u^2, \cdots, u^{n+1})$ is a symmetric multilinear form on $(C^2(\SS^n))^{n+1}$.
In particular, \begin{eqnarray*}
V_{k+1}(\underbrace{u, \cdots, u}_{k+1})=V_{k+2}(1, \underbrace{u, \cdots, u}_{k+1}).
\end{eqnarray*}
Therefore, the  Minkowski's integral formula holds:
\begin{eqnarray}\label{Min}
\int_{\SS^n} u \s_k(W_u) d\mu_{\SS^n}=\frac{k+1}{n-k}\int_{\SS^n}  \s_{k+1}(W_u) d\mu_{\SS^n}.
\end{eqnarray}

{\rm (2)} Let $u^i\in C^2(\SS^n), i=1, 2, \cdots,k$ be such that $u^i>0$ and $W_{u^i}\in \Gamma_k$ for $i=1,2,\cdots, k$, Then for any $v\in C^2(\SS^n)$,the Alexandrov-Fenchel inequality holds:
\begin{eqnarray}\label{AF0}
V_{k+1}(v, u^1,\cdots, u^k)^2\geq V_{k+1}(v, v, u^2, \cdots, u^k)V_{k+1}(u^1, u^1, u^2\cdots, u^k),
\end{eqnarray}
the equality holds if and only if $v = au_1+\sum_{l=1}^{n+1} a_lx_l$ for some constants $a, a_1, \cdots, a_{n+1}$.
In particular, there are some sharp constant $C_{n,k}$ such that
\begin{eqnarray}\label{AF}
\left(\int_{\SS^n}  \s_{k+1}(W_u) d\mu_{\SS^n}\right)^{\frac{1}{k+1}}\le C_{n,k}\left(\int_{\SS^n}  \s_{k}(W_u) d\mu_{\SS^n}\right)^{\frac1k}.
\end{eqnarray}
\end{lemma}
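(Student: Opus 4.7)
The plan for part (1) is to first establish the permutation symmetry of $V(u^1,\ldots,u^{n+1})$ via integration by parts on $\SS^n$, and then to deduce Minkowski's formula \eqref{Min} by combinatorial bookkeeping. The key analytic input is that on the round sphere the tensor $(W_u)_{ij}=\n_i\n_j u + u\,\delta_{ij}$ is a Codazzi tensor, $\n_k(W_u)_{ij}=\n_j(W_u)_{ik}$, a direct consequence of commuting covariant derivatives and the Ricci identity on $\SS^n$. This Codazzi property propagates to the polarized cofactor tensor $T^{ij}:=\frac{\p}{\p A_{ij}}\s_n(A,W_{u^2},\ldots,W_{u^n})\big|_{A=W_{u^1}}$, yielding $\n_i T^{ij}=0$. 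Integrating by parts against $u^{n+1}$ then allows me to swap the roles of any two entries, giving full permutation symmetry.

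To obtain \eqref{Min} from symmetry, I expand $\det(W_u+tI)$ two ways: as a polynomial it equals $\sum_{k=0}^n t^{n-k}\s_k(W_u)$, while by the multilinearity of $\s_n$ it equals $\sum_{k}\binom{n}{k}t^{n-k}\s_n(W_u,\ldots,W_u,I,\ldots,I)$ with $k$ copies of $W_u$. Matching coefficients gives the normalization $\binom{n}{k}\s_n(W_u,\ldots,W_u,I,\ldots,I)=\s_k(W_u)$. The two members of \eqref{Min} then become the two expressions $V_{k+1}(u,\ldots,u)=\binom{n}{k}^{-1}\int u\,\s_k(W_u)\,d\mu_{\SS^n}$ and $V_{k+2}(1,u,\ldots,u)=\binom{n}{k+1}^{-1}\int \s_{k+1}(W_u)\,d\mu_{\SS^n}$, and the identity $V_{k+1}(u,\ldots,u)=V_{k+2}(1,u,\ldots,u)$ from symmetry closes the loop.

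For part (2), my plan is to prove the Alexandrov--Fenchel inequality \eqref{AF0} by Hilbert's spectral method. Fix $u^2,\ldots,u^k$ with $W_{u^i}\in\Gamma_k$ and define the bilinear form $B(v,w):=V_{k+1}(v,w,u^2,\ldots,u^k)$ on $C^2(\SS^n)$; by part (1) this is symmetric, and Garding's positivity for $\Gamma_k$-matrices makes the associated self-adjoint operator $L$, determined by $B(v,w)=\int v\,L[w]\,d\mu_{\SS^n}$, linear and elliptic. The inequality \eqref{AF0} is then equivalent to the assertion that, after normalization, $L$ has at most one positive eigendirection, with $u^1$ itself spanning that direction. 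This I would verify by a Krein--Rutman style argument together with a maximum-principle / continuity deformation to the ball (where $L$ reduces to a constant shift of the Laplacian on $\SS^n$, whose spectrum is explicit via spherical harmonics). The equality clause $v=au^1+\sum a_l x_l$ emerges from the kernel of $L$: the coordinate functions satisfy $W_{x_l}\equiv 0$, so they automatically sit in the null direction. The sharp inequality \eqref{AF} then follows by iterating \eqref{AF0} along the diagonal and renormalizing via the round-sphere baseline $u\equiv 1$, $W_u=g_{\SS^n}$.

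The hardest step is the single-sign-eigenvalue property of $L$ needed for \eqref{AF0}, which in the classical convex-body setting hinges on Brunn--Minkowski. The cleanest route I foresee is to prove symmetry of $V$ analytically (step 1 above), then restrict to $u^i$ which are support functions of smooth strictly convex bodies to identify $V$ with genuine mixed volumes, invoke the classical Alexandrov--Fenchel inequality there, and finally extend to arbitrary $v\in C^2(\SS^n)$ by an approximation/linearization argument (since $B(v,w)$ depends continuously on the $u^i$ in the $C^2$ topology and the $\Gamma_k$-cone is open). This would bypass a direct spectral computation at the cost of invoking the Euclidean theory as a black box.
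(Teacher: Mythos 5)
Part (1) of your proposal is correct and is the standard argument behind the cited result: the Codazzi property of $W_u$ on $\SS^n$, the divergence-free polarized cofactor, integration by parts to get full symmetry of $V$, and the polarization identity $\s_n(A,\dots,A,I,\dots,I)=\binom{n}{k}^{-1}\s_k(A)$, which indeed produces the factor $\frac{k+1}{n-k}$ in \eqref{Min}. Your derivation of \eqref{AF} from \eqref{AF0} and \eqref{Min} by iterating the log-concavity of the sequence $V(\underbrace{u,\dots,u}_{m},1,\dots,1)$ against the round baseline is exactly the iteration the paper alludes to. Keep in mind, though, that the paper gives no proof of this lemma: it is quoted from \cite{GMTZ}, so the only in-paper content to compare with is that remark.

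The genuine gap is in part (2), and it is twofold. First, your fallback route --- identify $V$ with mixed volumes when the $u^i$ are support functions, invoke the classical Alexandrov--Fenchel inequality, and ``extend by approximation since $\Gamma_k$ is open'' --- cannot prove the lemma as stated. The hypothesis is only $W_{u^i}\in\Gamma_k$, and for $k\le n-1$ this cone strictly contains the positive semidefinite cone (e.g.\ matrices with one small negative eigenvalue lie in $\Gamma_k$); such $u^i$ are not $C^2$-limits of support functions of convex bodies. Openness of $\Gamma_k$ gives stability of the hypothesis under perturbation, not density of convex data in it, so the Euclidean theory is not available even as a black box. This extra generality is precisely what the paper emphasizes after the lemma and what it actually uses: in Proposition \ref{C0} and in the uniqueness step of the proof of Theorem \ref{thm1}, \eqref{AF} and \eqref{AF0} are applied to admissible solutions, which need not be spherically convex. (The invariance $v\mapsto v+cu^1$ does reduce the $v$-slot to convex data, but it does nothing for the slots $u^1,\dots,u^k$.) Second, in your primary spectral route the entire difficulty sits in the step you defer: proving that the elliptic operator $L$ associated with $B(v,w)=V_{k+1}(v,w,u^2,\dots,u^k)$ has exactly one positive eigenvalue and kernel spanned precisely by the coordinate functions, uniformly along a deformation of $(u^1,\dots,u^k)$ to $(1,\dots,1)$ inside the cone. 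A Krein--Rutman argument gives simplicity of a principal eigenvalue but neither excludes additional nonnegative directions nor identifies the kernel for these mixed-discriminant operators; and the equality statement $v=au^1+\sum_{l=1}^{n+1}a_l x_l$ is equivalent to that kernel identification, so it does not come for free from $W_{x_l}\equiv 0$ (which only shows the coordinate functions lie in the kernel, not that nothing else does). That missing step is the substance of \cite{GMTZ}; as written, your proposal asserts it rather than proves it.
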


\

Inequality \eqref{AF} in Lemma \ref{lemma}  follows from Alexandrov-Fenchel's  inequality \eqref{AF0} and Minkowski's  formula \eqref{Min} via an iteration argument.

We remark that in Lemma \ref{lemma} (2), it is sufficient to assume   $W_{u^i}\in \Gamma_k$ instead that $W_u$ is positive definite which is the classical assumption from convex geometry.

\medskip

We list some other known results which will be used in the rest of the sections.

The following theorem was proved for \eqref{CMpk} by Hu-Ma-Shen in \cite{HMS}, a generalization of the constant rank theorem in \cite{GM}.

\begin{theorem}[\cite{HMS}]\label{const rank}
Let $p_0>0$.
Let $u$ be a positive solution to \eqref{CMpk} such that $W_u$ is positive semi-definite. Then if $f^{-\frac{1}{p_0+k}}$ is spherically convex,
then $W_u$ is positive definite.
\end{theorem}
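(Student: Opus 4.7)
The approach is the constant rank method of Caffarelli-Friedman and Guan-Ma, extended to absorb the weight $u^{p_0}$ on the right-hand side of \eqref{CMpk}. I plan to argue by contradiction: assume the minimum rank $r=\min_{\SS^n}\mathrm{rank}\,W_u$ is strictly less than $n$. Because $\sigma_k(W_u)=u^{p_0}f>0$, necessarily $k\le r\le n-1$. Fix $x_0\in\SS^n$ with $\mathrm{rank}\,W_u(x_0)=r$, work in a small neighborhood $U$ of $x_0$, and choose an orthonormal frame in which the eigenvalues of $W_u$ split at $x_0$ into a \emph{good} block $G=\{1,\dots,r\}$ (bounded below by a positive constant throughout $U$) and a \emph{bad} block $B=\{r+1,\dots,n\}$ (vanishing at $x_0$ and small on $U$).

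The heart of the argument is the test function
\[
\phi(x) \;=\; \sigma_{r+1}\bigl(W_u(x)\bigr) \;+\; A\,q(x),
\]
where $q$ is an appropriate correction built from $f^{-1/(k+p_0)}$. The exponent $1/(k+p_0)$ is forced by rescaling: rewriting \eqref{CMpk} as $[\sigma_k(W_u)]^{1/k}=u^{p_0/k}f^{1/k}$ and expanding $\sigma_{r+1}(W_u)=\sum_{\alpha\in B}\lambda_\alpha\det(W_u|_G)+O(\lambda_B^2)$, the $u^{p_0}$ factor is exactly neutralized when one pairs it with $f^{-1/(k+p_0)}$, leaving the remaining second-order terms depending only on the Hessian of $f^{-1/(k+p_0)}$ on $\SS^n$. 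Since $W_u\ge 0$ on $U$, one has $\phi\ge 0$ with $\phi(x_0)=0$.

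The main analytic step is to establish a differential inequality of the form
\[
L\phi \;\le\; C\bigl(\phi+|\n\phi|\bigr) \quad\text{in } U,
\]
where $L=\sum F^{ij}\n_i\n_j$ and $F^{ij}=\partial\sigma_k/\partial W_{ij}$, elliptic on the good block and degenerate in the bad directions. To derive this one differentiates \eqref{CMpk} twice, commutes covariant derivatives via the Codazzi-type identity $(W_u)_{ij,k}=(W_u)_{ik,j}$ on $\SS^n$, and uses standard identities for $\sigma_k$ to organize the cross-terms. The residual uncontrolled contributions, which are linear in second derivatives of $f^{-1/(k+p_0)}$, are absorbed by the assumption $\n^2 f^{-1/(k+p_0)}+f^{-1/(k+p_0)}g_{\SS^n}\ge 0$. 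Bony's strong maximum principle for degenerate elliptic operators then forces $\phi\equiv 0$ on $U$, so $\mathrm{rank}\,W_u\equiv r$ on $U$. An open-closed argument propagates constant rank to all of $\SS^n$. Full rank then follows by combining with a continuous deformation from a full-rank reference solution (as used in the continuity scheme leading to Theorem \ref{thm1}), since the rank cannot jump along such a deformation.

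The principal obstacle is the second-order calculation in the middle step. After two differentiations, $\sigma_{r+1}(W_u)$ produces cross-terms between good and bad directions that are neither small nor of lower order, and they can be controlled only when $q$ is chosen so that its convexity condition matches the precise combination of $u$, $f$, and the minors of $W_u|_G$ that appear. Pinning down the exponent $-1/(k+p_0)$ and verifying that a single scalar convexity assumption on $f^{-1/(k+p_0)}$ handles every bad direction uniformly is the core technical content and is where the Hu-Ma-Shen extension of the Guan-Ma constant rank argument becomes necessary.
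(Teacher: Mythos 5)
A preliminary remark: the paper does not prove Theorem \ref{const rank} at all; it is quoted from Hu--Ma--Shen \cite{HMS}, whose proof is exactly the Caffarelli--Friedman/Guan--Ma constant rank machinery you outline. So your strategy is the right one in outline, but as a proof it has genuine gaps.

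First, the differential inequality $L\phi\le C(\phi+|\nabla\phi|)$ is not a ``main analytic step'' to be asserted --- it \emph{is} the theorem, and your justification of the exponent $-\tfrac{1}{k+p_0}$ (``the $u^{p_0}$ factor is exactly neutralized'') is a scaling heuristic, not a computation; the actual verification must exploit $W_u\ge 0$ itself, which is precisely where \cite{HMS} has to go beyond \cite{GM} (a naive transcription of Guan--Ma would require spherical convexity of $(u^{p_0}f)^{-1/k}$, which involves the unknown). Moreover your test function $\phi=\sigma_{r+1}(W_u)+A\,q$ with $q$ ``built from $f^{-1/(k+p_0)}$'' is both nonstandard and inconsistent with your own claims: there is no reason for $q$ to be nonnegative and to vanish at the minimal-rank point $x_0$, so ``$\phi\ge 0$ with $\phi(x_0)=0$'' fails as written. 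In the Guan--Ma/Hu--Ma--Shen scheme the test function is simply $\sigma_{r+1}(W_u)$ (or a refinement involving $\sigma_{r+2}$), and the hypothesis \eqref{convexity} enters only in establishing the differential inequality.

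Second, and most seriously, your last step does not deliver the stated conclusion. The theorem is about an \emph{arbitrary} positive solution with $W_u\ge 0$, not one lying on a deformation path; and even along a path, rank is only lower semicontinuous under $C^2$ limits, so ``the rank cannot jump along such a deformation'' does not preclude the rank dropping at the limiting parameter. Worse, appealing to the continuity scheme of Theorem \ref{thm1} is circular: the closedness step there invokes precisely the full-rank form of Theorem \ref{const rank}. After the strong maximum principle gives that the minimal-rank set is open (hence the rank is a constant $r$ on all of $\SS^n$), you still must exclude $k\le r<n$. The standard way: $W_u$ is a Codazzi tensor on $\SS^n$, and for a nonnegative Codazzi tensor of constant rank the kernel is a smooth integrable distribution with totally geodesic leaves (differentiate $W_u(X,\cdot)\equiv 0$ along kernel sections and use the polarization of $(\nabla_Y W_u)(X,X)=0$ together with the full symmetry of $\nabla W_u$); its leaves are pieces of great subspheres, and along a great circle $\gamma$ tangent to the kernel one has $(u\circ\gamma)''+u\circ\gamma=W_u(\gamma',\gamma')=0$, forcing $u\circ\gamma$ to change sign and contradicting $u>0$. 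Without this (or an equivalent) argument you have only proved constant rank, not $W_u>0$.
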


\medskip

The following lemma is a special case of Lemma 1 in \cite{GLL}, we state it for $W_u\in C^{1}(\mathbb S^n)$.
\begin{lemma}\label{gll-lemma} Let $e_1, \cdots, e_n$ be a local orthonormal frame on $\mathbb{S}^n$, denote $\nabla_s=\nabla_{e_s}, \forall s=1,\cdots, n$, then $\forall W=W_u\in \Gamma_k\cap C^{1}(\mathbb S^n)$, $k\ge 2$,
\begin{eqnarray}\label{concavity}
&&-\sigma_k^{ij,lm}\nabla_s W_{ij}\nabla_s W_{lm}\geq \s_k\left[\frac{\n_s \s_k}{\s_k}-\frac{\n_s \s_1}{\s_1}\right]\left[(\frac{1}{k-1}-1)\frac{\n_s \s_k}{\s_k}-(\frac{1}{k-1}+1)\frac{\n_s \s_1}{\s_1}\right].
\end{eqnarray} \end{lemma}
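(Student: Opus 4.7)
The approach is pointwise: the inequality is a statement about a fixed tangent direction $e_s$ at a fixed point, so I fix $x\in\mathbb S^n$ and choose a local orthonormal frame that diagonalizes $W=W_u$ there, $W=\mathrm{diag}(\lambda_1,\dots,\lambda_n)$ with $\lambda\in\Gamma_k$. Writing $\eta_{ij}=\nabla_s W_{ij}$ and using the standard Hessian formula for $\sigma_k$ on symmetric matrices at a diagonal argument, namely $\sigma_k^{ii,jj}=\sigma_{k-2}(\lambda|ij)$ and $\sigma_k^{ij,ji}=-\sigma_{k-2}(\lambda|ij)$ for $i\ne j$ with all other entries zero, one obtains
\begin{equation*}
-\sigma_k^{ij,lm}\eta_{ij}\eta_{lm}=-\sum_{i\ne j}\sigma_{k-2}(\lambda|ij)\,\eta_{ii}\eta_{jj}+2\sum_{i<j}\sigma_{k-2}(\lambda|ij)\,\eta_{ij}^2.
\end{equation*}
Since $\lambda\in\Gamma_k$ forces $\lambda|ij\in\Gamma_{k-2}$ and hence $\sigma_{k-2}(\lambda|ij)\ge 0$, the off-diagonal sum is non-negative and may be discarded.

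It therefore suffices to prove the purely algebraic inequality
\begin{equation*}
-\sum_{i\ne j}\sigma_{k-2}(\lambda|ij)\,x_ix_j\;\ge\;\sigma_k\Bigl(\tfrac{T_k}{\sigma_k}-\tfrac{T_1}{\sigma_1}\Bigr)\Bigl[\bigl(\tfrac{1}{k-1}-1\bigr)\tfrac{T_k}{\sigma_k}-\bigl(\tfrac{1}{k-1}+1\bigr)\tfrac{T_1}{\sigma_1}\Bigr]
\end{equation*}
for every $(x_1,\dots,x_n)\in\mathbb R^n$, where $T_k=\sum_i\sigma_{k-1}(\lambda|i)x_i$ and $T_1=\sum_i x_i$ are playing the role of $\nabla_s\sigma_k$ and $\nabla_s\sigma_1$. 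I would do this by minimizing the quadratic form $Q(x):=-\sum_{i\ne j}\sigma_{k-2}(\lambda|ij)x_ix_j$ over the affine subspace that fixes the two linear functionals $T_k$ and $T_1$. The Lagrange equations take the form $\sum_{j\ne i}\sigma_{k-2}(\lambda|ij)x_j=-\mu\,\sigma_{k-1}(\lambda|i)-\nu$; contracting against $1$ and against $\lambda_i$ and using the Newton identities
\begin{equation*}
\sum_i\sigma_{k-1}(\lambda|i)=(n-k+1)\sigma_{k-1}(\lambda),\qquad \sum_i\lambda_i\sigma_{k-1}(\lambda|i)=k\sigma_k(\lambda),
\end{equation*}
one solves for $\mu,\nu$ and evaluates the critical value of $Q$ as an explicit rational expression in $\sigma_{k-2},\sigma_{k-1},\sigma_k,T_k,T_1$.

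The main obstacle is the final algebraic matching: the expression obtained for $\min Q$ must be rearranged to match the factored right-hand side of the lemma. This is where the Newton--Maclaurin inequality $\sigma_{k-1}^2\ge \frac{k(n-k+1)}{(k-1)(n-k+2)}\,\sigma_k\sigma_{k-2}$ is used to convert the $\sigma_{k-2}$-coefficient in $\min Q$ into one involving only $\sigma_k$ and $\sigma_1$, after which the two factors $\bigl(T_k/\sigma_k-T_1/\sigma_1\bigr)$ and $\bigl[(\tfrac{1}{k-1}-1)T_k/\sigma_k-(\tfrac{1}{k-1}+1)T_1/\sigma_1\bigr]$ appear by direct comparison. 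This is precisely the content of Lemma~1 in \cite{GLL}; the hypothesis $W_u\in C^1(\mathbb S^n)$ ensures that the derivatives $\nabla_s W_{ij}$ are classical pointwise quantities, so no approximation or distributional argument is needed to transfer the pointwise inequality to the form stated.
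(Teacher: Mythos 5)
The paper itself does not prove this lemma: it is quoted verbatim as a special case of Lemma 1 in \cite{GLL}, so there is no internal argument for your attempt to be compared against, and your closing appeal to \cite{GLL} is in fact exactly what the paper does. Your preliminary reduction is correct and worth noting: at a point where $W$ is diagonal, $\sigma_k^{ij,lm}$ couples diagonal and off-diagonal entries of $\eta=\nabla_s W$ separately, the off-diagonal contribution $2\sum_{i<j}\sigma_{k-2}(\lambda|ij)\eta_{ij}^2$ is nonnegative, and both $\nabla_s\sigma_k=\sum_i\sigma_{k-1}(\lambda|i)\eta_{ii}$ and $\nabla_s\sigma_1=\sum_i\eta_{ii}$ see only the diagonal entries, so discarding the off-diagonal part loses nothing on either side. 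The reduction to a quadratic-form inequality in $x_i=\eta_{ii}$ is therefore sound.

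The gap is in the step you yourself flag as the ``main obstacle'': the constrained minimization does not close as described. Contracting the Euler--Lagrange relation $\sum_{j\ne i}\sigma_{k-2}(\lambda|ij)x_j=-\mu\,\sigma_{k-1}(\lambda|i)-\nu$ against $\lambda_i$ does give one usable identity, $(k-1)T_k=-k\mu\sigma_k-\nu\sigma_1$, but contracting against $\mathbf 1$ produces $(n-k+1)\sum_i\sigma_{k-2}(\lambda|i)x_i$ on the left --- a third, independent linear functional of $x$ that is not fixed by the constraints $T_k,T_1$ --- so $\mu,\nu$, and hence the critical value $\mu T_k+\nu T_1$, are not determined, and the asserted ``explicit rational expression in $\sigma_{k-2},\sigma_{k-1},\sigma_k,T_k,T_1$'' for $\min Q$ is unsubstantiated (the true minimum generally depends on more of $\lambda$ than these quantities). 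In addition, you do not address whether the minimum is attained (coercivity of $Q$ on the affine constraint set, i.e.\ strict positivity of $Q$ on $\{T_k=T_1=0\}$ modulo degenerate directions), you do not check the sign of the coefficient that the Newton--Maclaurin inequality is supposed to be applied to (an inequality substitution is only legitimate if that coefficient has the right sign), and the final identification with the factored right-hand side $\sigma_k\bigl[\tfrac{\nabla_s\sigma_k}{\sigma_k}-\tfrac{\nabla_s\sigma_1}{\sigma_1}\bigr]\bigl[(\tfrac1{k-1}-1)\tfrac{\nabla_s\sigma_k}{\sigma_k}-(\tfrac1{k-1}+1)\tfrac{\nabla_s\sigma_1}{\sigma_1}\bigr]$ is asserted rather than carried out (note equality does hold when $\eta$ is proportional to $W$, so any correct argument must be sharp along that ray). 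As a self-contained proof the proposal therefore has a genuine unfinished core; as a citation of Lemma 1 of \cite{GLL} it is acceptable and matches the paper's treatment.
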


\medskip

\section{A priori estimate for admissible solutions}\label{admissible}

In this section we establish $C^1$ a priori estimates for the admissible solutions of \eqref{CMpk}.

\subsection{A gradient estimate}
\begin{proposition}\label{gradient}
Let $u$ be a positive admissible solution to \eqref{CMpk}. Set $m_u=\min u$ and $M_u=\max u$.  Then there exist some positive constants $A$ and $0<\g<1$, depending on $n, k, \min f$ and $\|f\|_{C^1}$,  such that $$\frac{|\n u|^2}{|u-m_u|^\g}\leq A M_u^{2-\gamma}.$$
\end{proposition}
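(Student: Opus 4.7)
The strategy is to apply the maximum principle to the auxiliary function
$$\phi(x) := \frac{|\nabla u(x)|^2}{(u(x) - m_u)^\gamma}$$
for a parameter $\gamma \in (0,1)$ to be chosen in terms of $n, k, p_0$. Since $u$ is $C^2$ and $\gamma < 1$, Taylor expansion at any minimum point $x_0$ (where $\nabla u(x_0) = 0$) gives $|\nabla u|^2 = O(d(x,x_0)^2)$, so $|\nabla u|^2/(u - m_u)^\gamma \to 0$ as $x \to x_0$; hence $\phi$ extends continuously by $0$ at the minima of $u$ and attains its maximum at some $x^* \in \mathbb{S}^n$. If $\phi(x^*) = 0$ there is nothing to prove, so I assume $u(x^*) > m_u$.

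At $x^*$ I would apply the elliptic linearized operator $L := \sigma_k^{ij}(W_u)\nabla_i\nabla_j$ to $\Phi := \log|\nabla u|^2 - \gamma \log(u - m_u)$. The first-order condition $\nabla\Phi(x^*) = 0$ reads
$$2u_{ij}u_j \;=\; \gamma \frac{|\nabla u|^2}{u - m_u}\,u_i,$$
which rewrites the gradient-squared term in $L\log|\nabla u|^2$ as $\gamma^2\sigma_k^{ij}u_iu_j/(u-m_u)^2$. Then $L\Phi(x^*) \leq 0$ collapses to
$$L|\nabla u|^2 \;\leq\; \gamma \frac{|\nabla u|^2}{u - m_u}\,Lu \;-\; \gamma(1-\gamma)\frac{|\nabla u|^2\,\sigma_k^{ij}u_iu_j}{(u - m_u)^2},$$
the last term genuinely negative since $\gamma < 1$. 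This concavity surplus is exactly what ultimately closes the estimate.

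I would then open both sides using the equation. From $Lu = k\sigma_k(W_u) - (n-k+1)u\,\sigma_{k-1}(W_u)$ and \eqref{CMpk} one has $Lu = ku^{p_0}f - (n-k+1)u\,\sigma_{k-1}(W_u)$. For $L|\nabla u|^2$, after diagonalizing $W_u$ at $x^*$, applying the commutator $u_{kij} = u_{ijk} + (\text{sphere curvature terms in } u_l)$, and differentiating \eqref{CMpk} once, I get
$$L|\nabla u|^2 \;=\; 2\sigma_k^{ii}u_{ii}^2 + 2p_0 u^{p_0 - 1}f|\nabla u|^2 + 2u^{p_0}\langle\nabla u,\nabla f\rangle + \bigl(\text{curvature corrections}\bigr),$$
the corrections being bounded multiples of $|\nabla u|^2(n-k+1)\sigma_{k-1}(W_u)$ and $\sigma_k^{ij}u_iu_j$. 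Substituting into the inequality above, the positive quantities $\sigma_k^{ii}u_{ii}^2$, $p_0 u^{p_0-1}f|\nabla u|^2$, and $\gamma(n-k+1)u\sigma_{k-1}(W_u)|\nabla u|^2/(u-m_u)$ oppose the principal right-hand side $\gamma k u^{p_0}f|\nabla u|^2/(u-m_u)$ and the $C^1$-error $u^{p_0}\langle\nabla u,\nabla f\rangle$. Using the Newton--MacLaurin inequality $\sigma_{k-1}(W_u) \geq c(n,k)\sigma_k(W_u)^{(k-1)/k} = c(u^{p_0}f)^{(k-1)/k}$ to lower-bound the $\sigma_{k-1}$ term, Cauchy--Schwarz to absorb $u^{p_0}\langle\nabla u,\nabla f\rangle$ into a small fraction of $p_0 u^{p_0-1}f|\nabla u|^2$ plus a lower-order remainder, and $u(x^*) \leq M_u$ throughout, I extract
$$\frac{|\nabla u(x^*)|^2}{(u(x^*) - m_u)^\gamma} \;\leq\; A\,M_u^{2-\gamma}$$
for a constant $A$ and a $\gamma \in (0,1)$ depending only on $n, k, p_0, \min f$, and $\|f\|_{C^1}$.

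The main obstacle is the precise algebraic balance of powers: the ratio $(u - m_u)/u$ is uncontrolled in $(0,1]$, so one must simultaneously exploit the positivity of $p_0$ (yielding the bonus $p_0 u^{p_0-1}f|\nabla u|^2$ upon differentiating the right-hand side) and the concavity surplus $\gamma(1-\gamma)\sigma_k^{ij}u_iu_j/(u-m_u)^2$ to dominate the growth term $\gamma k u^{p_0}f|\nabla u|^2/(u-m_u)$. Because $p_0 < k$, the naive choice $\gamma = 1$ kills the concavity surplus and leaves the estimate unclosable, which is why the statement is genuinely restricted to $\gamma < 1$. Likewise, the factor $M_u^{2-\gamma}$ on the right rather than a pure constant is forced by the non-scale-invariance of \eqref{CMpk} under $u \mapsto \lambda u$ when $p_0 \neq k$.
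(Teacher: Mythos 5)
Your high-level frame agrees with the paper: apply the maximum principle to $\Phi=\log|\nabla u|^2-\gamma\log(u-m_u)$, use the critical condition to kill the $\gamma^2$ cross term and isolate a concavity surplus proportional to $\gamma(1-\gamma)$, and differentiate \eqref{CMpk} to expand $L|\nabla u|^2$ and $Lu$. However, the actual closing mechanism you propose does not work, and you are missing the two moves that make the paper's proof go through.

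First, the positive ingredients you list cannot dominate the growth term $\gamma k\,\sigma_k(W_u)|\nabla u|^2/(u-m_u)$ without a positive lower bound on $u$, which is \emph{not} available at this stage (that lower bound, Proposition \ref{lowerbound}, is proved later and uses this very gradient estimate as input). Concretely, your Newton--MacLaurin bound $\sigma_{k-1}\ge c\,\sigma_k^{(k-1)/k}$ turns the $\sigma_{k-1}$ term into $c\,u^{1-p_0/k}f^{-1/k}$ times the bad term, and since $1-p_0/k>0$ this ratio degenerates as $u\to 0$; similarly, the ``bonus'' $2p_0u^{p_0-1}f|\nabla u|^2$ beats the bad term with ratio $\sim(u-m_u)/u$, which vanishes near the minimum of $u$. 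Neither closes the estimate uniformly.

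Second, the paper's argument is a contradiction argument (assume $\Phi\ge AM_u^{2-\gamma}$ at the max) whose immediate dividend is the crucial lower bound $u_{11}\ge \tfrac{\gamma}{2}AM_u$: this is what makes $W_{11}\le(1+\delta)u_{11}$ and $u_{11}^2/u_1^2\ge 1$ hold, and what turns the $\nabla f$ error into a small multiple $C/\sqrt{A}$ of the bad term. With this in hand, the decisive step you omit is the dichotomy on the index set $K=\{i:\,u_{ii}>Nu_{11}\}$, combined with the sharp estimate $\sigma_{k-1}(W|m)W_{mm}\ge\frac{k}{n}\sigma_k(W)$ for the largest eigenvalue $W_{mm}$. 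If $K\neq\emptyset$ the Hessian-square term $\sum_{i\in K}F^{ii}u_{ii}^2/u_1^2$ supplies a multiple $N\gamma$ of $\sigma_k(W)/(u-m_u)$; if $K=\emptyset$ the concavity surplus $2(1-\gamma)F^{11}u_{11}/(u-m_u)$ does, using that $W_{11}$ is then comparable to $W_{mm}$ and $F^{11}\ge F^{mm}$. Only this case split lets one choose $N$, $\gamma$ (the paper's $\gamma=2/(N^2+2)$ with $N=n(1+2\delta)$), and $A$ large so the combined coefficient is strictly positive, yielding the contradiction. Without it, your inequality chain cannot be completed, so the proposal as written has a genuine gap.
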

\begin{proof}
Let $\Phi=\frac{|\n u|^2}{(u-m_u)^\g}$, where $0<\g<1$ is to be determined. First we claim $\Phi$ is well-defined, in other words, $\Phi$ can be defined at the minimum points. Consider $\Phi_\epsilon=\frac{|\n u|^2}{(u-m_u+\epsilon)^\g}$ for $\epsilon>0$. Then at a maximum point of $\Phi_\epsilon$, we have $$(\n^2 u+u I) \n u=(\frac{\g}{2} \frac{|\n u|^2}{u-m_u+\epsilon}+u)\n u.$$ Hence
$$\Phi_\epsilon\leq \frac{2}{\g}(u-m_u+\epsilon)^{1-\g}\max_{\SS^n} \l_{max}(W_u),$$ where $\l_{max}(W_u)$ is the largest eigenvalue of $W_u$.
Thus when $\g<1$, we have $\Phi_\epsilon(y)\to 0$ for $u(y)=m_u$ as $\epsilon\to 0$. Therefore, it make sense to define $\Phi=0$ at the minimum point of $u$.

Assume $\Phi$ attains its maximum at $x_0$. Then $u(x_0)>m_u$. By using the orthonormal frame and rotating the coordinate, we can assume $g_{ij}(x_0)=\d_{ij}$, $u_1(x_0)=|\n u|(x_0)$ and  $u_i(x_0)=0$ for $i=2,\cdots, n$. In the following we compute at $x_0$. By the critical condition,
\begin{eqnarray*}
\frac{2u_lu_{li}}{|\n u|^2}=\g\frac{u_i}{u-m_u}\hbox{ for each }i.
\end{eqnarray*}
Thus $u_{1i}=0$ for $i=2,\cdots, n$ and
\begin{eqnarray}\label{critical}
u_{11}=\frac{\g}{2}\frac{u_1^2}{u-m_u}.
\end{eqnarray}
By rotating the remaining $n-1$ coordinates, we can assume $(u_{ij})$ is diagonal. Consequently, $F^{ij}=\frac{\p \sigma_k}{\p W_{ij}}$ is also diagonal.

We may assume $\Phi\geq AM_u^{2-\gamma}$, where $A$ is a large constant to be determined. Then
\begin{eqnarray}\label{u11large}
u_{11}=\frac{\g}{2}\frac{u_1^2}{u-m_u}\geq \frac{\g}{2}\frac{AM_u^{2-\gamma}}{(u-m_u)^{1-\g}}\geq  \frac{\g}{2}AM_u.
\end{eqnarray}
Since $u\leq M_u$, for $\delta>0$, we may choose $A$ with $A>> \frac{2}{\g}$  such that \begin{eqnarray}\label{wu}
W_{11}=u_{11}+u\leq (1+\delta)u_{11}.
\end{eqnarray}

As $W_{ii}\ge u_{ii}$, by the maximal condition and \eqref{critical},
\begin{eqnarray}\label{11}
0&\geq &F^{ii}(\log\Phi)_{ii}\\&=& F^{ii}\frac{2u_{ii}^2+2u_lu_{lii}}{|\n u|^2}-\g \frac{F^{ii}u_{ii}}{u-m_u}+\g(1-\g)\frac{F^{ii}u_i^2}{(u-m_u)^2}\nonumber\\&=& \frac{2F^{ii}u_{ii}^2}{u_1^2}+\frac{2F^{ii}u_1(W_{ii1}-u_i\delta_{1i})}{u_1^2}-\g \frac{F^{ii}u_{ii}}{u-m_u}+\g(1-\g)\frac{F^{ii}u_i^2}{(u-m_u)^2}
\nonumber\\&=& \frac{2F^{ii}u_{ii}^2}{u_1^2}+2p_0u^{p_0-1}f+\frac{ 2u^{p_0} f_1}{u_1}-2F^{11}-\g \frac{F^{ii}u_{ii}}{u-m_u}+\g(1-\g)\frac{F^{ii}u_i^2}{(u-m_u)^2}\nonumber\\&\geq &\frac{2F^{ii}u_{ii}^2}{u_1^2}+ \g(1-\g)\frac{F^{11}u_1^2}{(u-m_u)^2}+\frac{ 2u^{p_0} f_1}{u_1} -2F^{11}-\g  \frac{F^{ii}W_{ii}}{u-m_u}\nonumber\\
&= &\frac{2F^{ii}u_{ii}^2}{u_1^2}+2(1-\g)\frac{F^{11}u_{11}}{u-m_u}+\frac{ 2u^{p_0} f_1}{u_1} -2F^{11}-k\g  \frac{\sigma_k(W)}{u-m_u}\nonumber\\
&= &\sum_{i\neq 1}\frac{2F^{ii}u_{ii}^2}{u_1^2}+ 2(1-\g)\frac{F^{11}u_{11}}{u-m_u}+\frac{ 2u^{p_0} f_1}{u_1} +2F^{11}(\frac{u_{11}^2}{u_1^2}-1) -k\g  \frac{\sigma_k(W)}{u-m_u}\nonumber.\nonumber
\end{eqnarray}
By (\ref{critical}) and (\ref{u11large}), if $A\ge \frac{4}{\g^2}$,
\begin{eqnarray}\label{00}
\frac{u_{11}^2}{u_1^2}-1\ge \frac{\g^2}4 A\frac{M_u}{u-m_u}-1\ge 0.
\end{eqnarray}

Using the definition of $\Phi$, we have
\begin{eqnarray}\label{77}
-\frac{ 2u^{p_0} f_1}{u_1}&\geq &-Cu^{p_0}\Phi^{-\frac12}(u-m_u)^{-\frac{\g}{2}} \nonumber\\
&\geq &-\frac{C}{\sqrt{A}}M^{-1+\frac{\g}{2}}u^{p_0}(u-m_u)^{-\frac{\g}{2}}\nonumber\\
&\geq& -\frac{C}{\sqrt{A}}u^{p_0}(u-m_u)^{-1}\nonumber\\
&\geq& -\frac{C}{\sqrt{A}}\frac{\sigma_k(W)}{u-m_u}\nonumber.
\end{eqnarray}

For $N>1$ to be determined later, denote \begin{eqnarray*}
K= \{i:  u_{ii}> Nu_{11}\}.
\end{eqnarray*}
When $A$ is large enough, by (\ref{wu}), \[u_{ii}=W_{ii}-u=W_{ii}-\delta u_{11}\ge W_{ii}-\delta u_{ii}, \forall i\in K.\] Hence
\begin{eqnarray}\label{100}
\sum_{i\in K}\frac{2F^{ii}u_{ii}^2}{u_1^2}\geq \sum_{i\in K}\frac{2NF^{ii}u_{ii}u_{11}}{u_1^2}=N\g \sum_{i\in K}\frac{F^{ii}u_{ii}}{u-m_u}\geq \frac{N\g}{1+\delta} \sum_{i\in K}\frac{F^{ii}W_{ii}}{u-m_u}.
\end{eqnarray}
Combining (\ref{critical})-(\ref{100})
\begin{eqnarray}\label{new11}
\quad  0\geq  \frac{N\g}{1+\delta} \sum_{i\in K}\frac{F^{ii}W_{ii}}{u-m_u}+ \frac{ 2(1-\g)}{1+\delta}\frac{F^{11}W_{11}}{u-m_u}-\frac{C}{\sqrt{A}}\frac{\sigma_k(W)}{u-m_u}-k\g  \frac{\sigma_k(W)}{u-m_u}.\end{eqnarray}

Let's denote $W_{mm}=\max\{W_{ii}| i=1,\cdots, n\}$.
We have
\[\sigma_k(W)=\sigma_{k-1}(W | m)W_{mm}+ \sigma_{k}(W | m).\]
If $\sigma_k(W|m)\le 0$, then \[\sigma_{k-1}(W | m)W_{mm}\ge \sigma_k(W).\]
Let's assume $\sigma_k(W|m)> 0$, that implies $(W|m)\in \Gamma_k$. In turn,  $\s_{k-1}(W|mi)>0, \forall i\neq m$ and
\[k\s_k(W|m)=\sum_{i\neq m} W_{ii}\s_{k-1}(W|mi)\le W_{mm}\sum_{i\neq m}\s_{k-1}(W|mi)= (n-k)W_{mm}\s_{k-1}(W|m).\]

Combining the above inequalities, we have
\begin{equation}\label{neweq01}  \sigma_{k-1}(W | m)W_{mm}\ge \frac{k}{n}\sigma_k(W).\end{equation}
If $K\neq \emptyset$, then $m\in K$, and
\begin{equation}\label{neweq02} \sum_{i\in K}\frac{F^{ii}W_{ii}}{u-m_u}\ge  \frac{F^{mm}W_{mm}}{u-m_u}\ge \frac{k}{n} \frac{\sigma_k(W)}{u-m_u}.\end{equation}
If $K= \emptyset$, then $0\le W_{11}\le W_{mm}\le N W_{11}$, as $F^{11}\ge F^{mm}$,
\begin{equation}\label{neweq03}
\frac{F^{11}W_{11}}{u-m_u}\ge \frac{1}{N}\frac{F^{mm}W_{mm}}{u-m_u}\ge \frac{k}{Nn}\frac{\sigma_k(W)}{u-m_u}.\end{equation}

Combining \eqref{new11}, \eqref{neweq02} and \eqref{neweq03},
\begin{eqnarray}\label{110}
0 &\geq & \big[\min\{\frac{Nk\g}{n(1+\delta)}, \frac{ 2k(1-\g)}{Nn(1+\delta)}\} -\frac{C}{\sqrt{A}}-k\g\big]\frac{\sigma_k(W)}{u-m_u}>0,
\end{eqnarray} if we pick $N=n(1+2\delta)$, $\g=\frac{2}{N^2+2}$, and $A$ sufficiently large (for any $\delta>0$ fixed, e.g, $\delta=\frac{1}{10^{10}}$). This is a contradiction. Thus for our choice of $\g$ and $A$, we must have $\Phi\leq AM_u^{2-\g}$ at its maximum.
\end{proof}
When $k=n$, similar result was proved in \cite{Lu} where upper bound of $u$ was readily available. We note the proof of Proposition \ref{gradient} also works for certain range of $p_0<0$.

\subsection{Upper bound of $u$}

We now use raw $C^1$ estimate in Proposition \ref{gradient} to get an upper bound of $u$.
\begin{proposition}\label{C0}
Let $u$ be a positive admissible solution to \eqref{CMpk}. Then there exist some positive constants $c_0$ and $C_0$, depending on $n, k, p_0, \min f$ and $\int_{\SS^n}f$, such that $$0<c_0\leq \max u\leq C_0.$$
\end{proposition}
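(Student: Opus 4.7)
My plan splits the two bounds.

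\emph{Lower bound on $\max u$.} I would evaluate the equation at a maximum point $x_M$. Since $\nabla^2 u(x_M)\le 0$, one has $W_u(x_M)\le M_u g_{\SS^n}$, hence $\sigma_k(W_u(x_M))\le \binom{n}{k}M_u^k$. Equation \eqref{CMpk} then yields $M_u^{p_0}\min f\le \binom{n}{k}M_u^k$, and since $p_0<k$ this gives $M_u\ge (\min f/\binom{n}{k})^{1/(k-p_0)}=:c_0$.

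\emph{Upper bound on $\max u$.} The plan is to combine Proposition \ref{gradient} with Minkowski's formula \eqref{Min} and the Alexandrov--Fenchel inequality \eqref{AF}. Using $(u-m_u)^{\gamma}\le M_u^{\gamma}$ in the gradient estimate gives the crude Lipschitz bound $|\nabla u|\le \sqrt{A}\,M_u$. Integrating along geodesics from $x_M$ produces the Harnack-type inequality $u(x)\ge M_u-\sqrt{A}\,M_u\,d(x,x_M)$, so on the geodesic ball $B_{r_0}(x_M)$ with $r_0:=1/(2\sqrt{A})$ we have $u\ge M_u/2$. This gives
\[
\int_{\SS^n}u^{p_0+1}f\,d\mu_{\SS^n}\ \ge\ (M_u/2)^{p_0+1}(\min f)\,|B_{r_0}|.
\]

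On the other hand, Minkowski's formula \eqref{Min} together with \eqref{AF} yields
\[
\int_{\SS^n} u^{p_0+1}f\,d\mu_{\SS^n}=\int_{\SS^n} u\,\sigma_k(W_u)\,d\mu_{\SS^n}=\tfrac{k+1}{n-k}\int_{\SS^n}\sigma_{k+1}(W_u)\,d\mu_{\SS^n}\le C_{n,k}\!\left(\int_{\SS^n}u^{p_0}f\,d\mu_{\SS^n}\right)^{(k+1)/k}\!.
\]
Since $\int u^{p_0}f\le M_u^{p_0}\int f$, the right-hand side is bounded by $C_{n,k}\,M_u^{p_0(k+1)/k}(\int f)^{(k+1)/k}$. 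Comparing with the Harnack lower bound and simplifying,
\[
M_u^{\,1-p_0/k}\ \le\ C',
\]
and since $p_0<k$ implies $1-p_0/k>0$, we conclude $M_u\le C_0$.

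The main obstacle I expect is the Harnack step: the radius $r_0$ and volume $|B_{r_0}|$ depend on the constant $A$ of Proposition \ref{gradient}, which itself depends on $\|f\|_{C^1}$, so the stated dependence of $C_0$ on $n, k, p_0, \min f, \int f$ is effectively mediated by the gradient estimate. One should also keep in mind that \eqref{AF} is only stated for $W_u\in\Gamma_k$, which matches our admissibility assumption (as emphasized in the remark following Lemma \ref{lemma}).
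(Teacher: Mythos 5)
Your proposal is correct and follows essentially the same route as the paper: the lower bound via evaluating the equation at a maximum point, and the upper bound via the Harnack-type consequence of Proposition \ref{gradient} combined with Minkowski's formula \eqref{Min} and the Alexandrov--Fenchel inequality \eqref{AF}. The only (harmless) deviation is that you bound $\int u^{p_0}f\le M_u^{p_0}\int f$ and compare powers of $M_u$, whereas the paper uses H\"older's inequality to first get an $M_u$-independent bound on $\int u^{p_0+1}f$; both close the argument since $p_0<k$.
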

\begin{proof}
Let $x_0$ be a maximum point of $u$. Then
$\n^2 u(x_0)\le 0$.
It follows that
\begin{eqnarray*}
\binom{n}{k}u^k(x_0)\geq \sigma_k(W_u)(x_0)=u^{p_0}(x_0) f(x_0),
\end{eqnarray*}
and in turn we have
\begin{eqnarray}\label{maxlb}
\max_{\SS^n} u=u(x_0)\geq \left(\frac{\min_{\SS^n} f}{\binom{n}{k}}\right)^{\frac{1}{k-p_0}}.
\end{eqnarray}
From Proposition \ref{gradient}, we know $|\n u|^2(x)\leq AM_u^2=Au(x_0)^2$ for  any $x\in \SS^n$,
we have
\begin{eqnarray}\label{harnack}
u(x)\geq \frac12 u(x_0) \hbox{ if } dist(x,x_0)\leq \frac{1}{2\sqrt{A}}.
\end{eqnarray}
Thus
\begin{eqnarray}\label{ieq1}
&&\int_{\SS^n} u^{p_0+1}f\geq \int_{\{x\in\SS^n: dist(x,x_0)\leq \frac{1}{2\sqrt{A}}\}} u^{p_0+1}f\\&\geq &\frac{1}{2^{p_0+1}}u^{p_0+1}(x_0)\min_{\SS^n} f \left|\{x\in\SS^n: dist(x,x_0)\leq\frac{1}{2\sqrt{A}}\}\right|.\nonumber
\end{eqnarray}
On the other hand, using Minkowski's integral formula \eqref{Min}, Alexandrov-Fenchel's inequality \eqref{AF}, H\"older's inequality and \eqref{CMpk}, we have
\begin{eqnarray}\label{ieq11}
&&\int_{\SS^n} u^{p_0+1}f=\int_{\SS^n} u\sigma_k(W_u)=\frac{k+1}{n-k}\int_{\SS^n} \s_{k+1}(W_u)\\&\leq& C\left(\int_{\SS^n} \s_{k}(W_u)\right)^{\frac{k+1}{k}}=C\left(\int_{\SS^n} u^{p_0} f\right)^{\frac{k+1}{k}}\nonumber \\&\leq &C\left(\int_{\SS^n} u^{p_0+1}f\right)^{\frac{p_0}{p_0+1}\frac{k+1}{k}}\left(\int_{\SS^n} f\right)^{\frac{1}{p_0+1}\frac{k+1}{k}}.\nonumber
\end{eqnarray}
Since $p_0<k$, it follows from \eqref{ieq11} that
\begin{eqnarray}\label{ieq2}
&&\int_{\SS^n} u^{p_0+1}f\leq C\left(\int_{\SS^n} f\right)^{\frac{k+1}{k-p_0}}.
\end{eqnarray}
Combining \eqref{ieq1} and \eqref{ieq2}, we obtain $u\leq u(x_0)\leq C$.
\end{proof}

Combining Proposition \ref{gradient} and Proposition \ref{C0} we get full $C^1$ estimate.
\begin{proposition}\label{gradient1}
Let $u$ be an admissible solution to \eqref{CMpk}. Set $m_u=\min u$.  Then there exist some positive constants $0<\g<1$ and $C$, depending on $n, k, p_0, \min f$ and $\|f\|_{C^1}$,  such that $$\frac{|\n u|^2}{|u-m_u|^\g}\leq C.$$
\end{proposition}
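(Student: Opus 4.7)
The statement is essentially a corollary of the two preceding propositions, so my plan is very short. The key point is that Proposition \ref{gradient} already produces a bound of exactly the claimed form, except that the right-hand side is $A M_u^{2-\gamma}$ with $A$ and $\gamma\in(0,1)$ depending only on $n,k,\min f,\|f\|_{C^1}$. The only missing ingredient is an a priori upper bound on $M_u=\max u$ that depends solely on the quantities allowed in the statement.

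First, I would invoke Proposition \ref{C0}, which supplies $M_u\le C_0$ with $C_0$ depending on $n, k, p_0, \min f$ and $\int_{\SS^n} f$. Since $\int_{\SS^n} f \le |\SS^n|\,\max f \le |\SS^n|\,\|f\|_{C^1}$, this dependence on $\int_{\SS^n} f$ is absorbed into the dependence on $\|f\|_{C^1}$ declared in the target statement. Substituting $M_u\le C_0$ into the estimate of Proposition \ref{gradient} and using $2-\gamma>0$, I obtain
\begin{equation*}
\frac{|\n u|^2}{|u-m_u|^\g}\ \le\ A\,M_u^{2-\gamma}\ \le\ A\,C_0^{2-\gamma}\ =:\ C,
\end{equation*}
with the same $\gamma\in(0,1)$ chosen in Proposition \ref{gradient} and the constant $C$ depending only on $n, k, p_0, \min f$ and $\|f\|_{C^1}$, as required.

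There is no real obstacle here; the genuine work has already been carried out in Proposition \ref{gradient} (the delicate maximum principle argument with the auxiliary function $|\n u|^2/(u-m_u)^\gamma$) and in Proposition \ref{C0} (the use of Alexandrov–Fenchel and the Minkowski formula to convert the equation into an upper bound for $\max u$). The only point to watch is bookkeeping of the constant dependencies: one must verify that neither $A$, $\gamma$, nor $C_0$ secretly depends on $\max u$ or on $m_u$, which is the case by inspection of those two proofs. Once this is checked, the combined estimate is immediate.
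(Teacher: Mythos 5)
Your proposal is correct and matches the paper's own (implicit) argument: the paper states Proposition \ref{gradient1} as an immediate consequence of combining Proposition \ref{gradient} with the upper bound $\max u\le C_0$ from Proposition \ref{C0}, exactly as you do. Your remark that the dependence on $\int_{\SS^n} f$ is absorbed into $\|f\|_{C^1}$ is the only bookkeeping point, and you handle it correctly.
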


\medskip

\section{Convex solutions}

So far, we have been dealing with general admissible solutions of equation (\ref{CMpk}). In order to solve the $L^p$-Christoffel-Minkowski problem, we need to establish the existence of convex solutions, i.e., solutions to (\ref{CMpk}) with $W_u\ge 0$. As in the case of the classical Christoffel-Minkowski problem \cite{GM}, one needs some sufficient conditions on the prescribed function $f$ in equation (\ref{CMpk}) when $k<n$. Unlike the classical Christoffel-Minkowski problem, equation (\ref{CMpk}) may degenerate when $p_0>0$ in general.  We first derive lower bound of convex solutions.

\subsection{Lower bound for $u$}
\

To get a uniform positive lower bound, we need to impose {\it evenness} assumption together with $W_u\ge 0$. We remark that such estimate was straightforward when $k=n$ since the equation implies a positive lower bound of volume. For $k<n$, a lower bound on quermassintegral $V_{k+1}$ does not guarantee the non-degeneracy of the convex body. We need some extra effort.

\begin{proposition}\label{lowerbound}
Let $u$ be a positive, even, spherically convex solution to \eqref{CMpk}. Then there exists some positive constant $C$, such that $$u\ge C>0.$$
\end{proposition}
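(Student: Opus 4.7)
I argue by contradiction: suppose there is a sequence of positive, even, spherically convex solutions with $m_u := \min u \to 0$; the aim is to deduce $\max u \to 0$, contradicting Proposition~\ref{C0}. Pick $e_0\in\SS^n$ with $u(e_0)=m_u$, so by evenness $u(-e_0)=m_u$ as well. Since $W_u\ge 0$ and $u>0$, $u$ is the support function of an origin-symmetric convex body $K\subset\r$; in particular $u$ extends positively $1$-homogeneously and sublinearly to $\r$, and $u(\pm e_0)=m_u$ is equivalent to the slab inclusion
\[K\subset\{y\in\r\;:\;|\<y, e_0\>|\le m_u\}.\]

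The key step is an $L^\infty$ bound for $u$ across the entire equator $E := e_0^\perp \cap \SS^n$. By Proposition~\ref{gradient1}, $|\n u|^2\le C(u-m_u)^\g$ globally for some $\g\in(0,1)$, so $(u-m_u)^{1-\g/2}$ is Lipschitz on $\SS^n$; starting from $u(e_0)=m_u$ this integrates to
\[u(x)\le m_u + A_1\, d(x,e_0)^{\b},\qquad \b := \tfrac{2}{2-\g} > 1,\]
where $d$ denotes the spherical distance. For $\hat x\in E$ and $r\in(0,\pi/2)$, evaluate this at $x_r := \cos r\, e_0+\sin r\,\hat x$: for every $y\in K$ the support inequality $\<y,x_r\>\le u(x_r)$ combines with $\<y, e_0\>\ge -m_u$ (from the slab) to give $\sin r\,\<y,\hat x\>\le 2m_u+A_1 r^\b$. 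Taking the supremum over $y\in K$ and setting $r = m_u^{1/\b}$ (so that $r^\b = m_u$), together with the identity $1-1/\b = \g/2$, yields
\[u(\hat x)\le A_2\, m_u^{\g/2}\qquad\text{for every } \hat x\in E.\]

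Finally write an arbitrary $z\in\SS^n$ as $z=\cos\theta\,e_0+\sin\theta\,\hat x$ with $\hat x\in E$ and $\theta\in[0,\pi]$; sublinearity of the support function, together with evenness, gives
\[u(z) \le u(\cos\theta\, e_0) + u(\sin\theta\,\hat x) = |\cos\theta|\, m_u + \sin\theta\, u(\hat x)\le m_u + A_2\, m_u^{\g/2}.\]
Thus $\max u \le A_3\,m_u^{\g/2}\to 0$, contradicting the lower bound $\max u\ge c_0>0$ of Proposition~\ref{C0} once $m_u$ is small enough, and yielding the explicit bound $m_u\ge (c_0/A_3)^{2/\g}$.

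The main subtlety is that the argument really requires the exponent $\b > 1$ to be strictly greater than one: a merely Lipschitz control on $u-m_u$ would produce $u(\hat x)\le A_2$ on the equator and no collapse of $\max u$. The H\"older-type improvement in Proposition~\ref{gradient}, with its exponent $\g \in (0,1)$, is exactly what drives the contradiction; everything else (slab containment, equator-to-sphere propagation via sublinearity) is then elementary geometry.
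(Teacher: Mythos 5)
Your proof is correct, and it takes a genuinely different route from the paper's. The paper uses the same two inputs (Proposition \ref{C0}, in particular \eqref{maxlb}, and the refined gradient estimate of Proposition \ref{gradient1}), but then restricts $u$ to the geodesic joining a maximum and a minimum point (which lie within distance $\pi/2$ of each other by evenness), writes $u''+u=g\ge 0$ along it, and uses the explicit representation formula for this ODE together with the H\"older bound on $u'$ to bound $\min u$ from below by a positive power of $\max u$; combined with $\max u\ge c_0$ this gives the claim. You instead pass to the convex body $K$ with support function $u$: evenness gives the two-sided slab $|\<y,e_0\>|\le m_u$, the gradient estimate integrates to $u(x)\le m_u+A_1\,d(x,e_0)^{\beta}$ with $\beta=2/(2-\gamma)>1$, and testing the support inequality in directions tilted by $r=m_u^{1/\beta}$ off $e_0$ collapses $u$ on the equator to $O(m_u^{\gamma/2})$; sublinearity plus evenness then yields $\max u\le A_3\,m_u^{\gamma/2}$, which is incompatible with $\max u\ge c_0$ unless $m_u\ge (c_0/A_3)^{2/\gamma}$. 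So the two arguments run in opposite directions: the paper bounds $\min u$ below by a power of $\max u$ via an ODE analysis, you bound $\max u$ above by a power of $\min u$ via convex geometry; both hinge on the exponent $\gamma\in(0,1)$ (equivalently $\beta>1$), exactly as you point out, and both use evenness essentially (consistent with the counterexamples of Section 5). Your route avoids the representation formula and the continuity analysis of $G(\tau)$, and since it is directly quantitative the contradiction/sequence framing is unnecessary; the only housekeeping worth adding is that a positive spherically convex solution of \eqref{CMpk} is automatically admissible (so Propositions \ref{C0} and \ref{gradient1} apply), that $W_u\ge 0$ is what makes the $1$-homogeneous extension of $u$ convex, and that $r=m_u^{1/\beta}<\pi/2$ once $m_u\le 1$, which one may assume.
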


\begin{proof} Since $u$ is even, we can assume without loss of generality that $u(x_1)=\max u=: M$ and $u(x_2)=\min u$ and $dist(x_1,x_2)=: 2d\leq \frac{\pi}{2}$.  So $d\leq \frac{\pi}{4}$. Let $\g: [-d,d]\to \SS^n$ be the arc-length parametrized geodesic such that $\g(-d)=x_1$ and $\g(d)=x_2$. Let $u: [-d, d]\to \mathbb{R}$ be the function $u(t)=u(\g(t))$ and denote \begin{eqnarray}\label{ode}
u''(t)+u(t)=g(t).
\end{eqnarray}
It follows from the critical condition of $u$ at $x_1$ and $x_2$ that \begin{eqnarray}\label{bvp}
u'(-d)=u'(d)=0, \quad u(-d)=M.
\end{eqnarray}

Let us explore the boundary value problem for the ODE, \eqref{ode} and \eqref{bvp}.
 It is easy to check that  $A\cos t+B\sin t$ is the general solutions to homogeneuous ODE $$u''+u=0$$ and a special solution to \eqref{ode} is given by $\cos t\int_{-d}^t \frac{1}{\cos^2 \tau}\int_{-d}^\tau g(s)\cos s ds d\tau$. Combining with the boundary condition \eqref{bvp}, we see all the solutions to \eqref{ode} and \eqref{bvp} are
 \begin{eqnarray}\label{odesolution}
u(t)=\cos t\int_{-d}^t \frac{1}{\cos^2 \tau}\int_{-d}^\tau g(s)\cos s ds d\tau+M\cos (d+t).
\end{eqnarray}
For simplicity, we denote by $G(\tau)=\int_{-d}^\tau g(s)\cos s ds$.
It follows from \eqref{odesolution}  that
\begin{eqnarray}
u(d)=\cos d \int_{-d}^d \frac{G(\tau)}{\cos^2 \tau}d\tau+M\cos 2d.
\end{eqnarray}
Our aim is to derive a positive lower bound of $\min u=u(d).$ In the case $2d\leq \frac{1}{2\sqrt{A}}$, from \eqref{harnack} and \eqref{maxlb},  $$\min u=u(x_2)\geq \frac12 u(x_1)=\frac12 M\geq \frac12\left(\frac{\min_{\SS^n} f}{\binom{n}{k}}\right)^{\frac{1}{k-p_0}}.$$ We consider now the case $2d\geq  \frac{1}{2\sqrt{A}}$.
From the definition of $G(\tau)$, by performing integration by parts, we have
\begin{eqnarray}\label{Gtau}
G(\tau)&=& \int_{-d}^\tau g(s)\cos s ds \nonumber\\
&=& \int_{-d}^\tau (u^{''}(s)+u(s))\cos s ds\nonumber\\
&=& u'(s)\cos s|_{-d}^\tau +\int_{-d}^\tau u'(s)\sin s+u(s)\cos s ds\nonumber\\
&=& u'(\tau)\cos \tau+ u(s)\sin s|_{-d}^\tau \nonumber\\
&=& u'(\tau)\cos \tau+ u(\tau)\sin \tau-M\sin(-d),\end{eqnarray}
where facts $u{'}(-d)=0, u(-d)=M$ are used.
In particular, as $u{'}(d)=0$,
\[G(d)=(u(d)+M)\sin d.\]

Since $\sin d\ge \sin(\frac{1}{4\sqrt{A}})$ and $u\ge 0$,  we see
\begin{eqnarray}\label{neweq1}
G(d)\ge M\sin(\frac{1}{4\sqrt{A}})>0.
\end{eqnarray}



By Proposition \ref{gradient1}, such that for $\tau\in [-d, d]$, $$|u'(\tau)|\leq C(u(\tau)-u(d))^{\frac{\g}2}\leq C\max_{\SS^n} |\n u|^{\frac{\g}2} |\tau-d|^{\frac{\g}2} \leq \tilde C|\tau-d|^{\frac{\g}2}.$$
Therefore, $G(\tau)$ is continuous as a function of $\tau$ from (\ref{Gtau}), and
\begin{eqnarray}\label{neweq2}
G(\tau)\geq G(d)-C^{*}|\tau-d|^{\frac{\g}2}, \quad \forall \tau\in [-d, d].
\end{eqnarray}
Note that $g(t)\ge 0$ as $W_u\ge 0$, hence $G(\tau)\ge 0, \forall \tau\in [-d,d]$. Take $\delta=(\frac{G(d)}{2C^{*}})^{\frac2{\g}}$, it follows from \eqref{odesolution}, \eqref{neweq1}, \eqref{neweq2} and $d\in [0,\frac{\pi}{4}]$ that
\begin{eqnarray*}
u(d)&=&\cos d \int_{-d}^d \frac{G(\tau)}{\cos^2 \tau}d\tau+M\cos 2d\geq
\cos d \int_{d-\delta}^d  G(\tau) d\tau\nonumber\\
&\geq &\frac{\sqrt{2}}{2}\cdot\frac12G(d)\delta\geq \frac{\sqrt{2}}{4}(2C^{*})^{-\frac 2 \g}\left(M\sin(\frac1{4\sqrt{A}})\right)^{1+\frac2{\g}}.
\end{eqnarray*}

\end{proof}

\subsection{Higher regularity}
\

\begin{proposition}\label{estimate}
Let $u$ be a positive, even, spherically convex solution to \eqref{CMpk}. For any $l\in \mathbb{R}$ and $0<\a<1$, there exists some positive constant $C$, depending on $n, k, p_0, l, \min f$ and $\|f\|_{C^l}$, such that \eqref{esti} holds.
\end{proposition}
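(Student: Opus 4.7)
The plan is to gather the $C^0$ and $C^1$ estimates already proved, upgrade the weak convexity $W_u\ge 0$ to strict convexity via the constant rank theorem, derive a second-order estimate, and then bootstrap through Evans--Krylov and Schauder theory.

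First I would combine Propositions \ref{C0}, \ref{lowerbound} and \ref{gradient1} to obtain constants $0<c\le C$ (depending only on $n,k,p_0,\min f,\|f\|_{C^1}$) such that
\begin{equation*}
c\le u\le C,\qquad |\n u|\le C\quad\text{on }\SS^n.
\end{equation*}
In particular, the right-hand side $u^{p_0}f$ of \eqref{CMpk} is bounded between two positive constants, and lies in $C^l(\SS^n)$ with controlled norm. Under the standing convexity hypothesis \eqref{convexity} on $f$, Theorem \ref{const rank} upgrades $W_u\ge 0$ to $W_u>0$ on all of $\SS^n$.

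The main step is the $C^2$ estimate. I would study an auxiliary function of the form
\begin{equation*}
P(x,\xi)=\log W_u(\xi,\xi)+\varphi(u)+\psi(|\n u|^2),\qquad |\xi|_{g_{\SS^n}}=1,
\end{equation*}
with $\varphi,\psi$ chosen convex/monotone in a standard way, and apply the maximum principle. At an interior maximum one differentiates \eqref{CMpk} twice in the maximizing eigendirection, contracts with $F^{ij}=\partial\s_k/\partial W_{ij}$, and exploits the concavity inequality of Lemma \ref{gll-lemma}. The new feature compared with the classical Christoffel--Minkowski setting \cite{GM, HMS} is the reaction term $u^{p_0}f$: its first and second derivatives produce factors of $u^{p_0-1}$ and $u^{p_0-2}$, which are harmless precisely because of the uniform lower bound $u\ge c>0$ from Proposition \ref{lowerbound}. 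This yields $\lambda_{\max}(W_u)\le C$. Combined with $\s_k(W_u)=u^{p_0}f\ge c'>0$ and $W_u\in\Gamma_k$, the Newton--Maclaurin inequalities then give a positive lower bound $\lambda_{\min}(W_u)\ge c''>0$, so \eqref{CMpk} is uniformly elliptic along $u$.

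Once uniform ellipticity is in hand, the Evans--Krylov theorem, applied to the concave operator $W\mapsto \s_k^{1/k}(W)$, produces a bound $\|u\|_{C^{2,\a}(\SS^n)}\le C$ for some $\a\in(0,1)$. Differentiating \eqref{CMpk} then gives a linear uniformly elliptic equation satisfied by $\n u$, with coefficients in $C^\a$ and right-hand side depending on $\n f$; standard Schauder estimates upgrade the regularity to $C^{3,\a}$, and iterating $l-1$ further times produces the bound $\|u\|_{C^{l+1,\a}(\SS^n)}\le C$ with the stated dependence on $\|f\|_{C^l(\SS^n)}$. The principal obstacle is the second-order estimate, since the $C^{2,\a}$ and Schauder parts are by now routine once uniform ellipticity has been secured; the positive lower bound for $u$ from the previous section is what makes the $C^2$ argument go through despite the degenerate factor $u^{p_0}$ with $p_0<k$.
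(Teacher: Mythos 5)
Your chain $C^0\Rightarrow C^1\Rightarrow C^2\Rightarrow$ Evans--Krylov/Schauder is the right skeleton and matches the paper, but two steps in the middle are genuinely wrong as written. First, Proposition \ref{estimate} does not assume the convexity condition \eqref{convexity} on $f$ (and its constant depends only on $n,k,p_0,l,\min f,\|f\|_{C^l}$), so you cannot invoke the constant rank Theorem \ref{const rank} to upgrade $W_u\ge 0$ to $W_u>0$; moreover that theorem is purely qualitative and would give no uniform bound even where it applies. Second, the claim that $\s_k(W_u)\ge c'>0$ together with $\lambda_{\max}(W_u)\le C$ forces $\lambda_{\min}(W_u)\ge c''>0$ by Newton--Maclaurin is false for $k<n$: the matrix $\mathrm{diag}(1,\dots,1,0)$ has $\s_k>0$, bounded eigenvalues, and a zero eigenvalue, and no such quantitative pinching can be extracted from the stated data. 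Fortunately, uniform ellipticity does not require $\lambda_{\min}(W_u)>0$: once $0\le \lambda_i\le C$ (from the $C^2$ bound and $W_u\ge 0$) and $\s_k(W_u)=u^{p_0}f\ge c>0$ (from Proposition \ref{lowerbound}), every $\s_{k-1}(W_u|i)$ is bounded below by a positive constant on the resulting compact set of matrices, so the linearized operator $F^{ij}$ is uniformly elliptic even if $W_u$ has a null eigenvalue. This is exactly how the paper concludes, with no appeal to strict convexity of $W_u$.

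For the $C^2$ estimate itself your test function $\log W_u(\xi,\xi)+\varphi(u)+\psi(|\n u|^2)$ is plausible but left schematic, and Lemma \ref{gll-lemma} is not the natural tool here (in the paper it is reserved for Section 6, where no lower bound on $u$ is available). The paper's argument is simpler: for $k\ge 2$ write the equation as $\s_k^{1/k}(W_u)=(u^{p_0}f)^{1/k}$, differentiate twice, use concavity of $\s_k^{1/k}$ to discard the second-derivative terms of the operator, and apply the maximum principle directly to $\s_1(W_u)=\Delta u+nu$, using $|\Delta (u^{p_0}f)^{1/k}|\le C\s_1$ (which is where the lower bound $u\ge c>0$ enters, as you correctly anticipate) together with $\sum_i \tilde F^{ii}\ge C_{n,k}\s_k^{-\frac{1}{k(k-1)}}\s_1^{\frac{1}{k-1}}$, whose superlinear growth in $\s_1$ closes the estimate; the case $k=1$ is linear and handled separately. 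If you replace your ellipticity step by the compactness argument above and either flesh out your test-function computation or switch to the maximum principle on $\s_1$, the proof goes through with the stated dependence of the constants.
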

\noindent{\it Proof of Proposition \ref{estimate}.}
From Proposition \ref{C0} and \ref{lowerbound}, we see $u$ is bounded from above and below by uniform positive constants. When $k=1$, as we already have $C^1$ bounds for $u$, higher regularity follows from elliptic linear PDE. We may assume $k\ge 2$.
Let $$\tilde{F}(W_u):=\s_k^{\frac1k}(W_u)=  (u^{p_0} f)^{\frac1k}.$$
Differentiating the equation  twice, we have
\begin{eqnarray}\label{equu}
\Delta (u^p f)^{\frac1k}&=&\tilde{F}^{ii}W_{iiss}+\tilde{F}^{ij,lm}W_{ijs}W_{klm}\nonumber\\&=&
\tilde{F}^{ii}(W_{ssii}-W_{ss}+nW_{ii})+\tilde{F}^{ij,lm}W_{ijs}W_{klm}\nonumber
\\&\le &\tilde{F}^{ii}(\s_1)_{ii}-\sum_i \tilde{F}^{ii}\s_1+n\s_k
\\&=& \tilde{F}^{ii}(\s_1)_{ii}-\sum_i \tilde{F}^{ii}\s_1+n u^{p_0} f.
\end{eqnarray}
where we used the concavity of $\tilde{F}$.

Note that $|\Delta (u^p f)^{\frac1k}|\le C\sigma_1$ and $\sum_i \tilde{F}^{ii}=\frac{n}{k}\s_k^{\frac{1-k}{k}}\s_{k-1}\ge  C_{n,k}\sigma_k^{-\frac1{k(k-1)}}\sigma_1^{\frac1{k-1}}$. Applying the maximum principle on \eqref{equu}, we see that $\s_1\le C$. Thus $\|u\|_{C^2}\le C$.
Since $W_u\ge 0$, we see that the equation is uniformly elliptic. Our assertion follows now from the standard Evans-Krylov and Schauder estimates.
\qed

\begin{remark}\label{rem}
The conditions that $u$ is even and $W_u\ge 0$ have been only used  in Proposition \ref{lowerbound}.
\end{remark}


\subsection{Existence}
\



In the following we use the continuity method to prove the existence and uniqueness of  strictly convex solutions.

\noindent{\bf Proof of Theorem \ref{thm1}.}

We first show that the solution is unique. The uniqueness of strictly spherically convex solution was showed by Lutwak \cite{Lut}. For convenience of readers, we give a proof on the uniqueness of admissible solutions.

Assume $u, v$ are two admissible solutions to \eqref{CMpk}. 
Then we have \begin{eqnarray}\label{eq'}
&&\s_k(W_u)=u^{p_0} f,\quad \s_k( W_v)=v^{p_0} f.\end{eqnarray}
Multiplying $v$ to the first equation in \eqref{eq'} and integrating over $\SS^n$, we have by using the Alexandrov-Fenchel inequality \eqref{AF0}
\begin{eqnarray}\label{uv1}
&&\int_{\SS^n} vu^{p_0}  f=\int_{\SS^n} v\s_k(W_u)=V_{k+1}(v, u,\cdots, u) \nonumber\\&\geq &V_{k+1}(u,u,\cdots,u)^{\frac{k}{k+1}} V_{k+1}(v, v,\cdots,v) ^{\frac{1}{k+1}}\nonumber
\\&=&\left(\int_{\SS^n} u^{p_0+1}f\right)^{\frac{k}{k+1}} \left(\int_{\SS^n} v^{p_0+1}f\right)^{\frac{1}{k+1}}.\end{eqnarray}
On the other hand, using H\"older's inequality,
\begin{eqnarray}\label{uv2}
\int_{\SS^n} vu^{p_0} f\leq\left(\int_{\SS^n} v^{p_0+1}f\right)^{\frac{1}{{p_0}+1}} \left(\int_{\SS^n} u^{p_0+1}f\right)^{\frac{{p_0}}{{p_0}+1}}.
\end{eqnarray}
Combining  \eqref{uv1} and \eqref{uv2}, in view of $0<p_0<k$, we obtain
\begin{eqnarray*}
\int_{\SS^n} u^{p_0+1}f\leq \int_{\SS^n} v^{p_0+1}f.
\end{eqnarray*}
Similar argument by interchanging the role of $u$ and $v$ gives
\begin{eqnarray*}
\int_{\SS^n} v^{p_0+1}f\leq \int_{\SS^n} u^{p_0+1}f.
\end{eqnarray*}
Thus all the above inequalities are equalities. In particular, equality holds in (\ref{uv1}). That is, equality holds in (\ref{AF}). In view of (\ref{eq'}), we must have $u\equiv v.$

We now prove the existence. Denote
$$f_t=\left(tf^{-\frac{1}{p_0+k}}+(1-t)\binom{n}{k}^{-\frac{1}{p_0+k}}\right)^{-(p_0+k)} \hbox{ for }  t\in [0, 1].$$ Then $f_t$ is even and satisfies \eqref{convexity}. Consider the equation
\begin{eqnarray}\label{conti-eq}
\s_k(\n^2 u+ug_{\SS^n})=u^{p_0}f_t.
\end{eqnarray}

Let $$S=\{t\in [0,1]| \eqref{conti-eq} \hbox{ has a positive, even solution $u_t$ with $W_{u_t}> 0$}\}.$$ It is clear that $u_0\equiv 1$ is a positive, even solution of \eqref{conti-eq} with $W_{u_0}> 0$ for $t=0$. Thus $S$ is non-empty.

Next we show $S$ is open. 
 The linearized operator at $u$ is given by  $$L_u(v):=\s_k^{ij}(W_u)(W_v)_{ij}-p_0u^{p_0-1}vf_t=\s_k^{ij}(W_u)(W_v)_{ij}-p_0 u^{-1}v\s_k(W_u).$$ Suppose $L_u(v)=0$.
Then \begin{eqnarray}\label{lin1}
\s_k^{ij}(W_u)(W_v)_{ij}-p_0u^{-1}v\s_k(W_u)=0.
\end{eqnarray}
Multiplying \eqref{lin1} with $u$ and integrating over $\SS^n$, we have
\begin{eqnarray*}
k\int_{\SS^n} v\s_k(W_u)=\int_{\SS^n} u\s_k^{ij}(W_u)(W_v)_{ij}=p_0\int_{\SS^n} v\s_k(W_u)
\end{eqnarray*}
Since $k\neq p_0$, we have $\int_{\SS^n} v\s_k(W_u)=0$.
On the other hand, Multiplying \eqref{lin1} with $v$ and integrating over $\SS^n$, we have
\begin{eqnarray*}
kV(v,v, u, \cdots, u)=\int_{\SS^n} v\s_k^{ij}(W_u)(W_v)_{ij}=p_0\int_{\SS^n} u^{-1}v^2\s_k(W_u)
\end{eqnarray*}

Since $V(v, u, \cdots, u)=\int_{\SS^n} v\s_k(W_u)=0$, by using  the Alexandrov-Fenchel inequality \eqref{AF0}, we see $$V(v,v, u, \cdots, u)\leq 0.$$
Thanks to $p_0>0$, we have $\int_{\SS^n} u^{-1}v^2\s_k(W_u)
\leq 0$, which implies $v\equiv 0$.
Hence the kernel of the linearized operator of the equation is trivial.
By the implicit function theorem, for each $t_0\in S$, there exists a neighborhood $\mathcal{N}$ of $t_0$ such that there exists a positive solution $u_t$ of \eqref{conti-eq} with $W_{u_t}> 0$ for $t\in \mathcal{N}$. Since $f_t$ is even, it follows from the uniqueness result that $u_t$ must be even. Hence, $\mathcal{N}\subset S$ ans $S$ is open.

We now prove the closeness of $S$.
Let $\{t_i\}_{i=1}^\infty\subset S$ be a sequence such that $t_i\to t_0$ and ${u_{t_i}}$ be a positive even solution to \eqref{conti-eq} with $W_{u_{t_i}}>0$ for  $t=t_i$. By virtue of the a priori estimate in Theorem \ref{estimate}, there exists a subsequence, still denote by  $u_{t_i}$, converges to some function $u$ in $C^{l+1}$ norm. In particular, $u$ is an even solution to \eqref{conti-eq}  for $t=t_0$. Suppose $W_u$ is not positive definite, then $W_u$ is positive semi-definite and $\det(W_u)(x_0)=0$ for some $x_0\in \SS^n$. Since $f_{t_0}$ satisfies \eqref{convexity}, we know from the constant rank theorem that $W_u$ must be positive definite. A contradiction.
Therefore, $t_0\in S$ and $S$ is closed.

 We conclude that $S=[0,1]$ and \eqref{conti-eq} with $t=1$, which is \eqref{CMpk}, has a positive even solution $u$ with $W_u>0$. The proof is completed. \qed

\medskip

 \section{Examples}

For the Minkowski problem for $p$-sum with $1< p< n+1$, a $C^2$ convex hypersurface to the $L^p$ Minkowski problem does not always exist even if $f$ is a smooth positive function. A series of counterexamples have been constructed in \cite{GL}. The arguments in \cite{GL} can be extended to construct similar examples for equation (\ref{CMpk}).

Let $\a=\frac{k}{k-p_0}$. Set $u(x)=(1-x_{n+1})^\a$, where $x= (x_1,\cdots, x_n, x_{n+1})=:(x', x_{n+1})\in \SS^n\subset \mathbb{R}^{n+1}.$
We view the open hemisphere $\SS^n_+$, centered at the north pole, as a graph over $\{x'\in \mathbb{R}^n: |x'|^2<1\}$.
The metric $g$ and its inverse $g^{-1}$  on $\SS^n_+$ are $$g_{ij}=\d_{ij}+\frac{x_ix_j}{1-|x'|^2},\quad g^{ij}=\d_{ij}-x_ix_j$$ and the Christoffel symbol is $$\Gamma_{ij}^l=g_{ij}x_l.$$
In this local coordinates, $u(x)=(1-\sqrt{1-|x'|^2})^\a$. By a direct computation,  we have
\begin{eqnarray*}
&&g^{il}\n^2_{jl} u+u\delta_{ij}=g^{il}(\p_i\p_j u-\Gamma_{il}^m\p_m u)+u\delta_{ij}
\\&=&(1-\sqrt{1-|x'|^2})^{\a-1}\left((\a-1) \sqrt{1-|x'|^2}+1\right)\d_{ij}+\a(\a-1)(1-\sqrt{1-|x'|^2})^{\a-2}x_ix_j
\end{eqnarray*}
Using $(\a-1)k=\a p_0$, we see
\begin{eqnarray*}
&&\sigma_k(g^{il}\n^2_{jl} u+u\delta_{ij})=u^{p_0} f
\end{eqnarray*}
where \begin{eqnarray*}
&&f=\sigma_k\left[\left((\a-1) \sqrt{1-|x'|^2}+1\right)\d_{ij}+\a(\a-1)\frac{x_ix_j}{1-\sqrt{1-|x'|^2}}\right].
\end{eqnarray*}
It is clear that the eigenvalues of  the matrix $(\d_{ij}+b\frac{x_ix_j}{|x|^2})$ are $1$ with multiplicity $n-1$ and $1+b$ with multiplicity $1$.  Thus
\begin{eqnarray*}
f&=&\left((\a-1) \sqrt{1-|x'|^2}+1\right)^k\times\\&&\left[\binom{n-1}{k}+\binom{n-1}{k-1}\left(1+\frac{\a(\a-1)|x'|^2}{(1-\sqrt{1-|x'|^2})\left((\a-1) \sqrt{1-|x'|^2}+1\right)}\right)\right]
\\&=&\binom{n}{k}\left((\a-1) \sqrt{1-|x'|^2}+1\right)^k\\&&+\binom{n-1}{k-1}\frac{\a(\a-1)|x'|^2}{1-\sqrt{1-|x'|^2}}\left((\a-1) \sqrt{1-|x'|^2}+1\right)^{k-1}.
\end{eqnarray*}
Since $$\sqrt{1-|x'|^2}=1-\frac12|x'|^2-\frac18|x'|^4+o(|x'|^4),$$ we have
\begin{eqnarray*}
f&=&\binom{n}{k}\left(\a^k -\frac12 k\a^{k-1}(\a-1)|x'|^2\right)\\&&+2\a(\a-1)\binom{n-1}{k-1}(1-\frac14|x'|^2)\left[\a^{k-1}-\frac12(k-1)\a^{k-2}(\a-1)|x'|^2\right]+o(|x'|^2)
\\&=&\left[\binom{n}{k}+2(\a-1)\binom{n-1}{k-1}\right]\a^k\\&&-\frac12\a^{k-1}(\a-1)\left[ k\binom{n}{k}+\binom{n-1}{k-1}\big(\a+2(k-1)(\a-1)\big)\right]|x'|^2+o(|x'|^2).
\end{eqnarray*}
Since $\a>1$, it is direct to see that
\begin{eqnarray}
f(|x'|)>0\hbox{ and }f''(|x'|)<0 \hbox{ near }x'=0.
\end{eqnarray}
Hence $\n^2 f^{-\frac{1}{p_0+k}}+f^{-\frac{1}{p_0+k}} I\geq 0$ is satisfied near the north pole.
As in \cite{GL}, using a lemma in \cite{G}, one may patch a global convex solution to equation (\ref{CMpk}) with some positive function $f$ such that solution is equal to $(1-x_{n+1})^\a$ near the north pole. That is $u=0$ at the north pole and condition \eqref{convexity} is satisfied near the north pole.

\medskip

Next, we will construct a solution to (\ref{CMpk}) for some positive smooth function $f$ satisfying condition (\ref{convexity}) everywhere but $u$ touches $0$. This shows that, a $C^2$ convex hypersuface to the  $k$-th Christoffel-Minkowski problem for $p$-sum with $1< p< k+1$ does not always exist even if $f$ is a smooth positive function such that \eqref{convexity} holds. Hence, the evenness assumption on $f$ cannot be dropped in Theorem \ref{thm1}.

\begin{proposition}\label{thm3}
There exists some $0<\bar p<k$, such that for $0<p_0\leq \bar p$, there is some positive function $f\in C^\infty(\SS^n)$  satisfying \eqref{convexity} and a solution $u$ to \eqref{CMpk} such that $(\n^2 u+u g_{\mathbb{S}^n})\ge 0$ and $u=0$ at some point. Moreover, $u$ is not $C^3$.
 \end{proposition}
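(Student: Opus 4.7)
The plan is to construct an explicit example via the ansatz $u_0(x)=(1-x_{n+1})^\alpha$ with $\alpha=\frac{k}{k-p_0}$ near the north pole of $\SS^n$, following the strategy of \cite{GL}, and to extend it to a global convex solution using the lemma of \cite{G} by patching $u_0$ to a large constant on the rest of the sphere.

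First, I would use the computations preceding the statement to conclude that near the north pole $u_0$ solves \eqref{CMpk} with a smooth function $f_0$ whose expansion reads $f_0(|x'|)=A+B|x'|^2+o(|x'|^2)$ with $A>0$ and $B<0$. Setting $h=f_0^{-1/(p_0+k)}$, a direct computation gives $h(0)=A^{-1/(p_0+k)}>0$ and $\n^2 h|_0 = -\tfrac{2B}{(p_0+k)A}\,A^{-1/(p_0+k)}\,I$, which is positive definite since $B<0$. Hence $\n^2 h+h\,g_{\SS^n}>0$ at the north pole, and by continuity on some geodesic ball $B_{2r}$ about it; this verifies \eqref{convexity} there.

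Second, I would extend $u_0$ to a globally defined, spherically convex $u\in C^2(\SS^n)$ via the patching lemma of \cite{G}: for $M>0$ sufficiently large one produces $u$ with $u=u_0$ on $B_r$, $u\equiv M$ outside $B_{3r}$, $W_u\ge 0$ on all of $\SS^n$ (strict off the north pole), $u>0$ off the north pole, and a smooth transition on the annulus $B_{3r}\setminus B_r$. Defining $f:=\sigma_k(W_u)/u^{p_0}$, one has $f$ smooth and positive on $\SS^n\setminus\{\text{north pole}\}$, and at the north pole $f$ coincides with $f_0$, which extends smoothly with $f_0(0)=A>0$; thus $f\in C^\infty(\SS^n)$ is positive everywhere. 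In the exterior region $\{u\equiv M\}$ one has $W_u=M\,g_{\SS^n}$, so $f\equiv\binom{n}{k}M^{k-p_0}$ is constant and $h=f^{-1/(p_0+k)}$ is a positive constant, making \eqref{convexity} trivial. In the transition annulus, \eqref{convexity} holds strictly on both boundary components, and by shrinking $r$ and choosing the interpolation bump appropriately one preserves \eqref{convexity} throughout via a continuity/compactness argument. For the $C^3$ failure: in local coordinates $y$ near the north pole, $u_0(y)\sim 2^{-\alpha}|y|^{2\alpha}$, hence $u\in C^2$ iff $2\alpha\ge 2$ (automatic for $p_0>0$) and $u\notin C^3$ iff $2\alpha\le 3$, i.e., $\alpha\le 3/2$, i.e., $p_0\le k/3$; one may therefore take $\bar p=k/3$.

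The main obstacle is the simultaneous preservation, in the patching, of (i) spherical convexity $W_u\ge 0$, (ii) positivity and smoothness of $f=\sigma_k(W_u)/u^{p_0}$, and (iii) the nonlinear condition \eqref{convexity} on $f^{-1/(p_0+k)}$. Of these, (iii) is the most delicate: it is a strict inequality on the prescribed function, inherited from the geometry of $u$ in the transition zone where one has only indirect control through the fully nonlinear equation. The argument relies crucially on the strict validity of \eqref{convexity} on both sides of the annulus in order to absorb the $C^2$-small deviations introduced by the interpolation; a uniform positive lower bound on $\n^2 h+h\,g_{\SS^n}$ near both boundaries of the annulus is what makes the patching feasible.
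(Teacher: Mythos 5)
There is a genuine gap at the heart of your argument: the patching step. Proposition \ref{thm3} requires \eqref{convexity} to hold on \emph{all} of $\SS^n$, but your construction only verifies it on a small ball about the north pole and in the exterior region where $u\equiv M$. In the transition annulus the prescribed function is defined implicitly by $f=\s_k(W_u)/u^{p_0}$, and there the deviation from either boundary regime is in no sense $C^2$-small: $u$ must climb from values of order $r^{2\a}$ (essentially $0$) to a large constant $M$, so $W_u$, and hence $f$ and $f^{-1/(k+p_0)}$, change drastically across the annulus. The strict validity of \eqref{convexity} on the two boundary circles gives no control in the interior of the annulus, and a ``continuity/compactness'' argument cannot absorb changes of this size; you would have to design the interpolation so that the fully nonlinear quantity $\n^2 f^{-\frac{1}{k+p_0}}+f^{-\frac{1}{k+p_0}}g_{\SS^n}$ stays nonnegative, which is exactly the point that needs proof and is not supplied. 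In fact the paper itself carries out your first construction (the local ansatz plus the patching lemma of \cite{G}, following \cite{GL}) and is careful to claim only that \eqref{convexity} holds \emph{near the north pole} for that example; this is precisely why that construction does not prove Proposition \ref{thm3}.

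The paper's actual proof avoids patching entirely: it takes $u(x)=(1-x_{n+1})^\a$, $\a=\frac{k}{k-p_0}$, as a \emph{global} solution on $\SS^n$, using $\n^2_{ij}x_{n+1}+x_{n+1}\d_{ij}=0$ and $|\n x_{n+1}|^2=1-x_{n+1}^2$ to compute $\s_k(W_u)=u^{p_0}f$ with the explicit function
$f=\frac{(n-1)!}{k!(n-k)!}\,(1+(\a-1)x_{n+1})^{k-1}\bigl[n+k\a(\a-1)+(n+k\a)(\a-1)x_{n+1}\bigr]$,
which is smooth on all of $\SS^n$ and positive for $\a<2$. Condition \eqref{convexity} is then verified everywhere by an explicit computation reducing it to the positivity of a quadratic $Q(y)$ in $y=x_{n+1}\in[-1,1]$, which holds once $\a-1>0$ is small, i.e.\ $p_0\le\bar p$ small. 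Your local expansion of $f$ near the pole and your $C^3$-failure analysis ($u\sim 2^{-\a}|y|^{2\a}$, not $C^3$ for $\a<3/2$) are fine as far as they go, but the claim $\bar p=k/3$ is not justified either, since the global convexity condition \eqref{convexity}, not the $C^3$ threshold, is what forces $\bar p$ to be small. To repair your proof you should replace the patching by the explicit global computation, or else genuinely prove that \eqref{convexity} can be preserved through the transition region, which is not done here.
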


\begin{proof} Choose an orthonormal basis $\{e_i\}_{i=1}^n$ on $\SS^n$. For coordinate functions $x_l, l=1, 2, \cdots, n+1$, we know $\n^2_{ij} x_{l}+x_l \delta_{ij}=0$. Since $|\n x_l|^2+x_l^2= |\n x_j|^2+x_j^2$ for any $j\neq l$ and $|\n x|^2+|x|^2= \sum_{i=1}^n |e_i|^2+|x|^2=n+1$, we get $|\n x_l|^2+x_l^2=1$ for any $l=1, 2, \cdots, n+1$.

Let $u(x)=(1-x_{n+1})^\a.$ By direct computations, $$\n_j u=-\a (1-x_{n+1})^{\a-1}\n_j x_{n+1},$$
\begin{eqnarray*}
\n^2_{ij} u&=&\a(\a-1)(1-x_{n+1})^{\a-2}\n_i x_{n+1}\n_j x_{n+1}-\a(1-x_{n+1})^{\a-1}\n^2_{ij} x_{n+1}
\\&=&\a(\a-1)(1-x_{n+1})^{\a-2}\n_i x_{n+1}\n_j x_{n+1}+\a(1-x_{n+1})^{\a-1}x_{n+1}\d_{ij}.
\end{eqnarray*}
Thus
\begin{eqnarray*}
\n^2_{ij} u+u\d_{ij}&=&\a(\a-1)(1-x_{n+1})^{\a-2}\n_i x_{n+1}\n_j x_{n+1}+(1-x_{n+1})^{\a-1}(1+(\a-1)x_{n+1})\delta_{ij}
\\&=&(1-x_{n+1})^{\a-1}(1+(\a-1)x_{n+1})\left[\d_{ij}+\frac{\a(\a-1)\n_i x_{n+1}\n_j x_{n+1}}{(1-x_{n+1})(1+(\a-1)x_{n+1})}\right] .
\end{eqnarray*}
Therefore
\begin{eqnarray*}
\s_k(\n^2_{ij} u+u\d_{ij})&=&(1-x_{n+1})^{k(\a-1)}(1+(\a-1)x_{n+1})^k\times
\\&\times&\left[\binom{n-1}{k}+\binom{n-1}{k-1}\left(1+\frac{\a(\a-1)|\n x_{n+1}|^2}{(1-x_{n+1})(1+(\a-1)x_{n+1})}\right)\right]
\\&=& u^\frac{k(\a-1)}{\a}(1+(\a-1)x_{n+1})^{k-1}\times
\\&\times&\left[\binom{n}{k}(1+(\a-1)x_{n+1})+\binom{n-1}{k-1}\a(\a-1)(1+x_{n+1})\right]
\\&=&\frac{(n-1)!}{k!(n-k)!}u^\frac{k(\a-1)}{\a}(1+(\a-1)x_{n+1})^{k-1}\times
\\&\times&\left[n+k\a(\a-1)+(n+k\a)(\a-1)x_{n+1}\right].
\end{eqnarray*}
In the second equality we used the fact $|\n x_{n+1}|^2=1-x_{n+1}^2$.

Let $f(x)=\frac{(n-1)!}{k!(n-k)!}(1+(\a-1)x_{n+1})^{k-1}\left[n+k\a(\a-1)+(n+k\a)(\a-1)x_{n+1}\right]$ and $\a=\frac{k}{k-p_0}$.
Then $u(x)=(1-x_{n+1})^\a$ with $\a=\frac{k}{k-{p_0}}$ is a solution to $$\s_k(\n^2_{ij} u+u\d_{ij})=u^{p_0} f \hbox{ on }\SS^n. $$

Now let us analyze the function $f$ on $\SS^n$. First, $f$ is smooth. Second it is direct to check that when $\a<2$, i.e. ${p_0}<\frac k 2$, $f>0$.

We claim that when $0<\a-1$ lies in certain range, i.e., ${p_0}\le \bar p$, $f$ satisfies the convexity condition $(\nabla^2 f^{-\frac{1}{k+{p_0}}}+ f^{-\frac{1}{k+{p_0}}} I)>0$.

Let $\tilde g=\tilde f^{-\frac{1}{k+{p_0}}}$, where $$\tilde f=\frac{k!(n-k)!}{(n-1)!}f= (1+(\a-1)x_{n+1})^{k-1}\left[n+k\a(\a-1)+(n+k\a)(\a-1)x_{n+1}\right].$$
We need to show $\left(\frac{\n^2_{ij}\tilde g}{\tilde g}+\d_{ij}\right)>0$.
To simplify the notation, we denote $y=x_{n+1}$.
Direct computations give
$$\frac{\n_i \tilde g}{\tilde g}= -\frac{\a-1}{k+{p_0}}\left[ \frac{k-1}{1+(\a-1)y}+\frac{n+k\a}{n+k\a(\a-1)+(n+k\a)(\a-1) y}\right]\n_i y.$$
\begin{eqnarray*}
\frac{\n^2_{ij} \tilde g}{\tilde g}&=&\frac{\n_i \tilde g \n_j \tilde g}{\tilde g^2}-\frac{\a-1}{k+{p_0}}\left[ \frac{k-1}{1+(\a-1)y}+\frac{n+k\a}{n+k\a(\a-1)+(n+k\a)(\a-1) y}\right]\n^2_{ij} y
\\&&+\frac{\a-1}{k+{p_0}}\left[ \frac{(k-1)(\a-1)}{(1+(\a-1)y)^2}+\frac{(n+k\a)^2(\a-1)}{[n+k\a(\a-1)+(n+k\a)(\a-1) y]^2}\right]\n_{i} y\n_j y.
\end{eqnarray*}
Using $\n^2_{ij} y=-y\d_{ij}$, we have
\begin{eqnarray*}
&&\frac{\n^2_{ij} \tilde g}{\tilde g}+\d_{ij}
\\&=&\left\{\frac{\a-1}{k+{p_0}}\left[ \frac{k-1}{1+(\a-1)y}+\frac{n+k\a}{n+k\a(\a-1)+(n+k\a)(\a-1) y}\right]y+1\right\}\d_{ij}
\\&&+ \Bigg\{\frac{\a-1}{k+{p_0}}\left[ \frac{(k-1)(\a-1)}{(1+(\a-1)y)^2}+\frac{(n+k\a)^2(\a-1)}{[n+k\a(\a-1)+(n+k\a)(\a-1) y]^2}\right]\\&&\quad \quad+\left(\frac{\a-1}{k+{p_0}}\right)^2\left[ \frac{k-1}{1+(\a-1)y}
+\frac{n+k\a}{n+k\a(\a-1)+(n+k\a)(\a-1) y}\right]^2\Bigg\}\n_{i} y\n_j y.
\end{eqnarray*}
Notice that the coefficient of $\n_{i} y\n_j y$ on the RHS of above equation is always positive. To ensure $\left(\frac{\n^2_{ij} \tilde g}{\tilde g}+\d_{ij}\right)$ is positive definite, we only need the coefficient of $\d_{ij}$ on the RHS of above equation is positive, i.e.,
\begin{eqnarray}\label{xxx1}
\frac{\a-1}{k+{p_0}}\left[ \frac{k-1}{1+(\a-1)y}+\frac{n+k\a}{n+k\a(\a-1)+(n+k\a)(\a-1) y}\right]y+1>0.
\end{eqnarray}

Note $k+{p_0}=k+\frac{k(\a-1)}{\a}=k\frac{2\a-1}{\a}$ and the denominator is always positive when $\a<2$. Inequality \eqref{xxx1} is equivalent to say the quadratic form
\begin{eqnarray*}
Q(y)&=&\a(\a-1)\big\{(k-1)[n+k\a(\a-1)+(n+k\a)(\a-1) y]+(n+k\a)[1+(\a-1)y] \big\}y
\\&&+k(2\a-1)[1+(\a-1)y][n+k\a(\a-1)+(n+k\a)(\a-1) y]>0.
\end{eqnarray*}
By  regrouping,
\begin{eqnarray*}
Q(y)&=&k(3\a-1)(\a-1)^2(n+k\a)y^2
\\&&+k(\a-1)\{\a[n+(k-1)\a(\a-1)+\a]+(2\a-1)(2n+k\a^2)\}y
\\&&+k(2\a-1)[n+k\a(\a-1)].
\end{eqnarray*}
By computation, we see that when $0<\a-1$ is close to $0$, i.e., $\bar p$ is sufficiently small,
\begin{eqnarray*}
Q(-1)>0\hbox{ and }Q'(-1)>0\hbox{ and }Q''(y)>0 \hbox{ for }y\in [-1, 1].
\end{eqnarray*}
Therefore when ${p_0}\le \bar p$, we have $Q(y)$ is positive for $y\in [-1, 1]$.

In conclusion, for $0<p_0$  small, we construct a globally defined  function $u$ which is a solution of $\s_k(\n^2_{ij} u+u\d_{ij})=u^{p_0} f$ with a smooth, positive function $f$ with $(\nabla^2 f^{-\frac{1}{k+{p_0}}}+ f^{-\frac{1}{k+{p_0}}} I)>0$. However, $u$ has a zero.\end{proof}

In this example, $(\n^2 u+u g_{\mathbb{S}^n})$ is not of full rank at some point. This implies that for such $f$, the Gauss map fails to be regular and the convex body with support function $u$ is not $C^2$.
However, in the next section we will show that the solution to the PDE \eqref{CMpk} for $\frac{k-1}{2}\le p_0<k$ is always $C^{2}$ when $f$ is $C^2$.

\medskip

\section{$C^2$ estimate for $p_0\ge \frac{k-1}{2}$}

To prove Theorem \ref{thm4'}, we consider the following perturbed equation
\begin{equation}\label{eq_ep} \sigma_k(\n^2 u+(u+\ep) g_{\SS^n})=u^{p_0} f, \end{equation}
for $\ep>0$.

First of all, we prove the following existence for an auxilliary equation below.
\begin{proposition}\label{aux_ex}
For any $v \in C^4(\SS^n)$ with $v>0$ and $f\in C^4(\SS^n)$, there exists a unique  solution $u\in  C^{5,\a}(\SS^n)$ ($0<\a<1$) with $(\n^2 u+vg_{\SS^n})\in \Gamma_k$, which we denote by $T_f(v)$, to
\begin{eqnarray}\label{auxeq}
\s_k(\n^2 u+v g_{\SS^n})=u^{p_0}f.
\end{eqnarray} Moreover, there exists some constant $C$, depending on $n, k, p_0, \a,\|v\|_{C^4}, \|f\|_{C^4}, \min v, \min f$, such that
$$\|u\|_{C^{5,\a}}\le C.$$
\end{proposition}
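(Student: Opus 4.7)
The plan is the continuity method. The essential simplification over Theorem \ref{thm1} is that here $v$ is an independent datum, decoupled from $u$, so the equation is genuinely elliptic in $u$ with lower-order term $u^{p_0}f$, and $C^0$ bounds follow by an elementary pointwise comparison rather than a Brunn--Minkowski or volume argument.

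Uniqueness is immediate from the maximum principle. If $u_1$ and $u_2$ are two admissible solutions, then at a maximum point of $u_1-u_2$ one has $\n^2(u_1-u_2)\le 0$, so $\n^2 u_1 + v g_{\SS^n} \le \n^2 u_2 + v g_{\SS^n}$ in the matrix sense. Monotonicity of $\sigma_k$ on $\Gamma_k$ gives $u_1^{p_0}f \le u_2^{p_0}f$ at that point, hence $u_1\le u_2$ there; since $p_0>0$ and $f>0$, this forces $u_1\le u_2$ globally, and symmetry yields $u_1\equiv u_2$.

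For existence I deform along
$$\sigma_k(\n^2 u + v g_{\SS^n}) = u^{p_0} f_t, \qquad f_t = tf + (1-t)\binom{n}{k} v^k, \quad t\in [0,1],$$
and let $S$ be the set of $t\in [0,1]$ for which an admissible $C^{5,\a}$ solution exists. At $t=0$, $u\equiv 1$ works since $\sigma_k(v g_{\SS^n}) = \binom{n}{k} v^k$. Openness of $S$ follows from the implicit function theorem, because the linearization
$$L(w) = \sigma_k^{ij}(\n^2 u + v g_{\SS^n}) w_{ij} - p_0 u^{p_0-1} f_t\, w$$
is elliptic with strictly negative zeroth-order coefficient (using $u>0$, $f_t>0$, $p_0>0$), and is therefore invertible on $C^{2,\a}(\SS^n)$.

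For closedness I need a priori $C^{5,\a}$ estimates uniform in $t$. The $C^0$ bound is immediate: evaluating the equation at $\max u$ and $\min u$ and using $\n^2 u\le 0$, respectively $\n^2 u \ge 0$, there yields
$$\left(\frac{\binom{n}{k}(\min v)^k}{\max f_t}\right)^{1/p_0} \le u \le \left(\frac{\binom{n}{k}(\max v)^k}{\min f_t}\right)^{1/p_0},$$
with endpoints uniform in $t$ since $\min f_t \ge \min\{\min f,\,\binom{n}{k}(\min v)^k\} > 0$. With $u$ pinched between positive constants, the $C^1$ bound follows by the standard maximum-principle argument applied to $|\n u|^2$, differentiating the equation once and absorbing the commutator terms produced by the Ricci identity on $\SS^n$. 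The $C^2$ bound, which is the main technical step, follows by applying the maximum principle to $\log \lambda_{\max}(W_u) + \varphi(|\n u|^2) + \psi(u)$ with suitably chosen auxiliary functions; the potentially delicate contributions are the $\sigma_k^{ij} v_{l} g_{ij}$ terms generated when commuting a derivative with the equation, but these are dominated by $\|v\|_{C^2}$ times $\sum_i \sigma_k^{ii}$ and absorbed by the concavity inequality in Lemma \ref{gll-lemma}. Once $\lambda_{\max}(W_u)$ is controlled, the equation is uniformly elliptic and concave, so Evans--Krylov yields $C^{2,\a}$, and Schauder bootstrapping using $v,f\in C^4$ upgrades to $C^{5,\a}$. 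Hence $S=[0,1]$ and the $t=1$ solution is the required $T_f(v)$.
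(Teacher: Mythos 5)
Your overall architecture --- comparison-principle uniqueness, continuity method, $C^0$ bounds from the maximum and minimum points of $u$, a maximum-principle second-order estimate, then Evans--Krylov and Schauder --- is the same as the paper's. Your deformation $f_t=tf+(1-t)\binom{n}{k}v^k$ with starting solution $u\equiv 1$ is in fact cleaner than the paper's choice $f_0=v^{-p_0}\s_k(\n^2 v+vg_{\SS^n})$ with $u_0=v$, since your $f_t$ is automatically positive and your initial solution automatically admissible. The uniqueness, openness and $C^0$ steps are correct as written (your uniqueness via touching at a maximum of $u_1-u_2$ is equivalent to the paper's linearized comparison argument).

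The genuine gap is the $C^1$ step. On $\SS^n$ the ``standard maximum-principle argument applied to $|\n u|^2$'' does not close: at a maximum point of $|\n u|^2$, differentiating the equation and commuting derivatives gives, with $F^{ij}=\s_k^{ij}(W)$, the identity $u_lF^{ij}\n_i\n_j\n_l u= u_l\n_l(u^{p_0}f_t)-(\n u\cdot\n v)\sum_iF^{ii}-|\n u|^2\sum_iF^{ii}+F^{ij}u_iu_j$; since at that point $\n u$ is a null direction of $\n^2u$, the term $F^{ij}u_iu_j$ cancels only the $F^{11}$ piece, leaving $-|\n u|^2\sum_{i\ge 2}F^{ii}$ (plus the $\n v$ term of the same order), and the available positive terms $p_0u^{p_0-1}f_t|\n u|^2$ and $F^{ij}u_{li}u_{lj}$ carry no factor $\sum_iF^{ii}$ to absorb it. This is precisely why the paper's gradient estimate (Proposition \ref{gradient}) uses the test function $|\n u|^2/(u-m_u)^{\g}$ and a careful case analysis rather than $|\n u|^2$ itself. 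The gap propagates: your $C^2$ test function contains $\varphi(|\n u|^2)$, so it presupposes the missing gradient bound; also, the $v$-terms there are absorbed not by the concavity inequality of Lemma \ref{gll-lemma} (which only handles the $\s_k^{ij,lm}$ terms) but by the good term of size $\s_1\sum_iF^{ii}$ produced by the commutation, via Newton--Maclaurin. The fix is easy in this auxiliary setting precisely because $0<c\le u\le C$: for $k\ge 2$ one has $|\n^2u|\le C(1+\s_1(W))$ since $W\in\Gamma_2$, and interpolation gives $\|\n u\|_{\infty}^2\le C\|u\|_\infty(\|u\|_\infty+\|\n^2u\|_\infty)\le C(1+\max\s_1)$, which is all that the paper's trace argument (maximum principle applied to $\s_1(W)=\Delta u+nv$ as in Proposition \ref{estimate}) needs to close without any prior gradient bound; the full $C^1$ bound then follows a posteriori (and $k=1$ is semilinear). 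Either argue this way, or adapt Proposition \ref{gradient} to fixed $v$, tracking the extra $\n v\,\sum_iF^{ii}$ terms; as written, your $C^1$ step, and hence your $C^2$ step, is not justified.
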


\begin{proof}
{\bf Step 1.} A priori estimate for \eqref{auxeq}.

Let $u(x_0)=\min u$. Then
$$ \binom{n}{k}v(x_0) \leq \s_k(\n^2 u+v g_{\SS^n})(x_0)= u(x_0)^{p_0}f(x_0).$$
It follows that $u\ge u(x_0)\ge c>0.$
Similarly, we have $u\le C$.

Denote $w_{ij}=u_{ij}+v\d_{ij}.$
Note that $w_{iiss}=w_{ssii}+2w_{ii}-2w_{ss}-v_{ii}+v_{ss}$ for any $i, s$.
For $C^2$ estimate, we can apply the same argument as in the proof of Proposition \ref{estimate} to $\tr(w)= \Delta u+nv$. Once we get the $C^2$ estimate and the positive lower bound of $u$, \eqref{auxeq} is uniformly elliptic. By the Evans-Krylov and the Schauder theory, we have higher order estimate.

{\bf Step 2.} Existence and uniqueness for  \eqref{auxeq}.

To prove the uniqueness, let $u$ and $\tilde{u}$ be two solutions. Then the difference $h= u-\tilde{u}$ satisfies
$a_{ij}(x)h_{ij}+c(x)h=0,$ where $a_{ij}(x)$ is an elliptic operator and $c(x)<0$. Thus $h\equiv 0$ by strong maximum principle.

We use continuity method to prove the existence. We set $f_0:=\frac{1}{v^{p_0}}\s_k(\n^2 v+vg_{\SS^n})$ and $f_t=(1-t)f_0+tf$. Consider \eqref{auxeq}  with $f=f_t$. It is easy to see  $u_0\equiv v$ is the unique solution to \eqref{auxeq} for $f=f_0$.
Next, the kernel of the linearized operator $L_{u_t}$ is trivial and self-adjoint. Thus the openness follows from standard implicit function theorem. The closeness follows from the a priori estimates in Step 1. Therefore, we have the existence of \eqref{auxeq} via
continuity method.
\end{proof}


Next we show the existence for the perturbed equation \eqref{eq_ep}.

 \begin{proposition}\label{est_eqep} Let $\ep>0$ and $\frac{k-1}{2}\le p_0<k$. There exists a  solution $u\in  C^4(\SS^n)$ with $(\n^2 u+(u+\ep)g_{\SS^n})\in \Gamma_k$ to
\eqref{eq_ep}.
Moreover, there exist some positive constants $c_\ep$ and $C_\ep$, depending on $n, k, p_0,  \|f\|_{C^4}, \min f$ and $\ep$, such that
$$u\ge c_\ep\hbox{ and } \|u\|_{C^{5,\a}}\le C_\ep.$$
\end{proposition}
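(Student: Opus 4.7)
The plan is to combine Proposition \ref{aux_ex} with a continuity argument along a one-parameter family of right-hand sides, closed by $\epsilon$-dependent a priori estimates. The key structural observation is that $W^\epsilon_u := \nabla^2 u + (u+\epsilon) g_{\SS^n} = W_{u+\epsilon}$: the perturbed Christoffel operator is just the usual one applied to the translated support function $u+\epsilon$. This lets the arguments of Propositions \ref{gradient}, \ref{C0}, and \ref{estimate} transfer essentially verbatim to \eqref{eq_ep}.

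I would first establish $\epsilon$-dependent a priori estimates. The lower bound is immediate: at a minimum point $x_0$ of $u$, $\nabla^2 u(x_0)\ge 0$ forces every eigenvalue of $W^\epsilon(x_0)$ to exceed $u(x_0)+\epsilon\ge\epsilon$, so
\[
u(x_0)^{p_0}f(x_0) = \sigma_k(W^\epsilon)(x_0) \ge \binom{n}{k}\epsilon^k,
\]
giving $u\ge c_\epsilon := \bigl(\binom{n}{k}\epsilon^k/\max f\bigr)^{1/p_0}$. For the upper bound, I would adapt Proposition \ref{C0}: Minkowski's formula \eqref{Min} applied to $u+\epsilon$, combined with \eqref{AF}, the equation, and H\"older (together with $p_0<k$), yields $\int u^{p_0+1}f\le C$. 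Proposition \ref{gradient}'s argument carries over to the perturbed equation with no essential change, since $W^\epsilon_{ij,s} = u_{ij,s}+u_s\delta_{ij}$ (the only derivative identity used) is unaffected by the constant shift; so a Harnack-type neighborhood of the maximum propagates the integral bound to $u\le C_\epsilon$. With the two-sided bound and $p_0\ge (k-1)/2$ in hand, the $C^{1,1}$ argument of Proposition \ref{estimate}, applied to $\mathrm{tr}\, W^\epsilon = \Delta u + n(u+\epsilon)$, yields $\|u\|_{C^{1,1}}\le C_\epsilon$. Since $\sigma_k(W^\epsilon) = u^{p_0}f\ge c_\epsilon^{p_0}\min f > 0$ and the eigenvalues of $W^\epsilon$ are bounded, the Newton--Maclaurin inequalities force $W^\epsilon$ uniformly inside $\Gamma_k$, giving uniform ellipticity; Evans--Krylov and Schauder bootstrap then deliver the full $C^{5,\alpha}$ estimate.

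For existence, I would deform the right-hand side. Take $f_0 := \binom{n}{k}(1+\epsilon)^k$ so that $u\equiv 1$ solves \eqref{eq_ep} with $f=f_0$, set $f_t := (1-t)f_0 + tf$, and let
\[
S := \{t\in[0,1]: \eqref{eq_ep}\text{ with }f=f_t\text{ admits a }C^{5,\alpha}\text{ solution }u_t\text{ with }W^\epsilon_{u_t}\in\Gamma_k\}.
\]
Then $0\in S$. For openness at $t_0\in S$, the linearization
\[
Lv \;=\; \sigma_k^{ij}(W^\epsilon_{u_{t_0}})(v_{ij}+v\delta_{ij}) - p_0\, u_{t_0}^{p_0-1}\, v f_{t_0}
\]
has trivial kernel: writing $\tilde u := u_{t_0}+\epsilon$ so that $W^\epsilon_{u_{t_0}} = W_{\tilde u}$, and assuming $Lv=0$, multiplying by $\tilde u$ and by $v$ in turn and integrating, the resulting identities force equality in the Alexandrov--Fenchel inequality \eqref{AF0}; together with $0<p_0<k$, this forces $v\equiv 0$, exactly as in the openness argument of Theorem \ref{thm1}. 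The implicit function theorem then gives openness. Closedness follows from the a priori estimates and compactness, using that the limit $W^\epsilon_u\in\bar\Gamma_k$ is promoted back to $\Gamma_k$ via $\sigma_k(W^\epsilon)$ bounded away from zero and Newton--Maclaurin. Hence $S=[0,1]$ and the case $t=1$ produces the desired solution.

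The main technical obstacle is the $L^\infty$ upper bound on $u$: the Alexandrov--Fenchel chain only controls $\int u^{p_0+1} f$, and passage to a pointwise bound depends on Proposition \ref{gradient} extending cleanly to the perturbed equation. This is a delicate but essentially local computation at the maximum of $\Phi = |\nabla u|^2/(u-m_u)^\gamma$: one must rerun the argument while tracking how the constant shift $\epsilon$ inside the operator interacts with the differentiated equation $\sigma_k(W^\epsilon) = u^{p_0}f$. Once this is settled, the rest of the continuity machinery, as well as the bootstrap to $C^{5,\alpha}$, is routine.
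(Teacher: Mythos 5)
Your Step 1 is essentially the paper's: the lower bound $u\ge c_\ep$ from the minimum point, the transfer of the $C^1$ and $C^2$ arguments to $W^\ep_u=W_{u+\ep}$, and the Evans--Krylov/Schauder bootstrap are all as in the paper (and, as a minor point, the restriction $p_0\ge\frac{k-1}{2}$ is not needed for these $\ep$-dependent bounds; the lower bound $u\ge c_\ep$ already makes the argument of Proposition \ref{estimate} work for all $0<p_0<k$). The genuine gap is in your existence argument, in the openness step of the continuity method. You assert that the kernel of the linearization
\[
L(v)=\s_k^{ij}(W_{\tilde u})\big(\n^2_{ij}v+v\,\delta_{ij}\big)-p_0\,u^{p_0-1}f_t\,v,\qquad \tilde u:=u+\ep,
\]
is trivial ``exactly as in the openness argument of Theorem \ref{thm1}''. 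But that argument hinges on the fact that the \emph{same} function appears in the Hessian operator and on the right-hand side; for $\ep>0$ they differ, since $\s_k(W_{\tilde u})=u^{p_0}f_t$ with $\tilde u\neq u$. Multiplying $L(v)=0$ by $\tilde u$ and integrating (using that $\s_k^{ij}(W_{\tilde u})$ is divergence free) gives
\[
k\int_{\SS^n} v\,\s_k(W_{\tilde u})\,d\mu=p_0\int_{\SS^n}\frac{\tilde u}{u}\,v\,\s_k(W_{\tilde u})\,d\mu,
\qquad\text{i.e.}\qquad
(k-p_0)\int_{\SS^n} v\,u^{p_0}f_t=p_0\,\ep\int_{\SS^n} v\,u^{p_0-1}f_t,
\]
and the right-hand side has no sign, so you cannot conclude $\int v\,\s_k(W_{\tilde u})=0$. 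Without that, the Alexandrov--Fenchel step $V(v,v,\tilde u,\dots,\tilde u)\le 0$ is unavailable and the kernel-triviality argument collapses; multiplying by $u$ instead only trades the error for $\ep\,(n-k+1)\int v\,\s_{k-1}(W_{\tilde u})$, with the same problem. So openness of your set $S$ at a general $t$ is not established, and no easy fix along these lines is apparent.

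This is precisely why the paper does not run a continuity method on \eqref{eq_ep} but uses Leray--Schauder degree. It first solves the auxiliary problem $\s_k(\n^2u+v\,g_{\SS^n})=u^{p_0}f_t$ for fixed $v>0$ (Proposition \ref{aux_ex}), producing a compact solution map $\tilde T_t(\o)=\log T_{f_t}(e^{\o}+\ep)$ whose fixed points are exactly the solutions of \eqref{eq_ep}; the $\ep$-dependent estimates confine all fixed points to a ball $S_K$, so $\deg(I-\tilde T_t,S_K,0)$ is defined and independent of $t$. Injectivity of the linearization is then needed only at $t=0$ at the explicit solution $u_0\equiv 1$, where it follows from the spectral gap of $\Delta$ on $\SS^n$; and to see that the degree at $t=0$ is $(-1)^\beta\neq 0$ one must also know that $u_0\equiv1$ is the \emph{only} solution at $t=0$, which is where the hypothesis $p_0\ge\frac{k-1}{2}$ really enters, through the $\ep$-independent estimates of Propositions \ref{C1 p} and \ref{C2} and a limiting argument $\ep\to 0$ to the uniqueness statement for \eqref{CMpk}. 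To repair your proof you would either have to supply a genuinely new injectivity argument for $L$ at arbitrary solutions of \eqref{eq_ep} (valid for all $\ep>0$, not just $\ep$ small), or switch to this degree-theoretic scheme.
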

 \begin{proof} {\bf Step 1.} A priori estimate for \eqref{eq_ep}.

 From the equation, $u>0$ automatically. Let $u(x_0)=\min u$, then
 $$\binom{n}{k} \ep^k\le \s_k(\n^2 u+(u+\ep) g_{\SS^n})(x_0)= u(x_0)^{p_0}f(x_0).$$
A positive lower bound $u\ge c_\ep$ follows.
One may follow the same argument in the previous section to prove the $C^1$ and the $C^2$ estimate depending on $c_\ep$. We remark that for these arguments one needs only assume $(\n^2 u+(u+\ep)g_{\SS^n})\in \Gamma_k$, see Remark \ref{rem}.

{\bf Step 2.} Existence for  \eqref{eq_ep}.

We use the degree theory to prove the existence.  Denote by $f_t=(1-t)\binom{n}{k}(1+\ep)^k+tf$ for $t\in [0,1]$.
For any $\o\in C^4(\SS^n)$ and $f_t$ we consider
\begin{eqnarray}\label{auxeq1}
\s_k(\n^2 u+(e^\o+\ep) g_{\SS^n})=u^{p_0}f_t(x).
\end{eqnarray}
From Proposition \ref{aux_ex}, there exists a unique positive solution $T_{f_t}(e^\o+\ep)$ to \eqref{auxeq1}. Define an operator \begin{eqnarray*}
\tilde{T}_t: &C^4&\to C^{5,\a}\\
&\o&\mapsto \log T_{f_t}(e^\o+\ep).
\end{eqnarray*}
It follows from the a priori estimate in Proposition \ref{aux_ex} that $\tilde{T}_t$ is compact.

It is easy to see that $\o$ is a fixed point of $\tilde{T}_t$, i.e., $\o=\tilde T_t(\o)$, if and only if
$u=e^\o$ is a solution to \eqref{eq_ep} with $(\n^2 u+(u+\ep) g_{\SS^n})\in \Gamma_k$.
Therefore, by using the a priori estimates in Proposition \ref{est_eqep}, we see that any fixed point of $\tilde T_t$ is not on the boundary of
$$S_K=\{\o\in C^4: \|\o\|_{C^4}\le K\}$$ when $K$ is sufficient large, depending on $\ep$.

By the degree theory,  $\deg (I-\tilde T_t, S_K, 0)$ is well defined and independent of $t$.

{\bf Claim.} For $t=0$, $u_0\equiv 1$ is the unique solution to \eqref{eq_ep} with $f=f_0$ and the linearized operator $L_{u_0}$ at $u_0\equiv 1$ is injective.

To show this claim, we need the a priori estimate from Proposition \ref{C1 p} and \ref{C2} below, where we  assume $p_0\ge \frac{k-1}{2}$.

First,  there is no other solutions of \eqref{eq_ep} near $u_0\equiv 1$. The linearized operator for the equation \eqref{eq_ep} at $u_0\equiv 1$ is given by
$$L_{u_0}\rho=\s_k^{ij}((1+\ep)g_{\SS^n})(\n^2_{ij}\rho+\rho g_{ij})-p_0f_0\rho=(1+\ep)^{k-1}\binom{n-1}{k-1}(\Delta \rho+n\rho)-p_0\binom{n}{k}(1+\ep)^k\rho.$$ Since the first eigenvalue of $\Delta$ on $\SS^n$ is $n$, we see that the kernel of $L_{u_0}$ is trivial, namely, $L_{u_0}$ is injective. Thus the assertion follows by the implicit function theorem.

Second, for $\epsilon>0$ small,  there exist no other solutions than $u_0\equiv 1$. Suppose there are $\epsilon_l\to 0$ and non-constant solutions $u_l$ for each $\epsilon_l$. By the a priori estimate independent of $\epsilon$ by Propsition \ref{C1 p} and \ref{C2}, there is a subsequence, still denote by $\{u_{l}\}$, with $u_l \to \tilde u$ in $C^{1,\alpha}$, where $\tilde u\in C^{1,1}(\SS^n)$ is a solution of the un-perturbed equation \eqref{CMpk} with $f=f_0$.  It follows from the previous step that $u_l$ is uniformly away from $u_0\equiv 1$, so $\tilde u$ is not the constant $1$, which
   contradicts to the uniqueness of \eqref{CMpk}.

   Third,  for any $\epsilon>0$ such that $u>0$ solves \eqref{eq_ep} with $f=f_0$, the uniqueness is true. This follow immediately from previous two steps. We finish the proof of the claim.

We turn back to the proof of the existence. Since 
$L_{u_0}$ is injective, the derivative $\tilde{T_0}'$ in $C^4$ is injective. The degree can be computed as $\deg (I-T_0, S_K, 0)=(-1)^\beta$ where $\beta$ is the number of eigenvalues of $\tilde{T_0}'$ greater than one. In any case $\deg (I-T_t, S_K, 0)=\deg (I-T_0, S_K, 0)=(-1)^\beta$ is not equal to zero. Therefore we have the existence for \eqref{eq_ep} for any $t\in [0,1]$, in particular for $t=1$. The assertion follows. \end{proof}

We now show the a priori estimate independent of $\epsilon$. The arguments in the proof for $C^1$ estimate in previous section yield the $C^1$ estimate for solutions to \eqref{eq_ep}.

\begin{proposition}\label{C1 p} Let $\ep\ge 0$. Let $u$ be a solution to \eqref{eq_ep} with $(\n^2 u+(u+\ep)g_{\SS^n})\in \Gamma_k$. Then there exists some positive constant $C$, depending on $n, k, p_0, \min f$ and $\|f\|_{C^1}$, but independent of $\ep$, such that $$\| u\|_{C^1}\leq C.$$
\end{proposition}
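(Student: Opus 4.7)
The plan is to adapt Propositions \ref{gradient}, \ref{C0}, and \ref{gradient1} to the perturbed equation \eqref{eq_ep}, obtaining all estimates with constants independent of $\ep$. The key substitution is $v := u+\ep$, which gives $W_v = \n^2 v + v g_{\SS^n} = \n^2 u + (u+\ep) g_{\SS^n} \in \Gamma_k$, $\n v = \n u$, $v - m_v = u - m_u$, $M_v = M_u + \ep$, and rewrites \eqref{eq_ep} as $\s_k(W_v) = u^{p_0} f = (v-\ep)^{p_0} f$. This converts the perturbed equation into one of the same algebraic form as \eqref{CMpk} for $v$, up to the small discrepancy that the right-hand side depends on $v-\ep$ rather than $v$.

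First I would repeat the barrier argument of Proposition \ref{gradient} applied to $\Phi = |\n v|^2 / (v-m_v)^{\g}$. The pointwise maximum-principle computation is structurally identical once written in terms of $v$: the trace identity $\sum_i F^{ii} W_{v,ii} = k\s_k(W_v) = k u^{p_0} f$ is unchanged, differentiating the PDE gives $F^{ii} W_{v,ii1} = p_0 u^{p_0-1} f\, v_1 + u^{p_0} f_1$ with the first summand positive (hence harmlessly discarded, since $p_0 > 0$), and the $f_1$-term is controlled via $u^{p_0} = \s_k(W_v)/f \le \s_k(W_v)/\min f$. This yields $\g \in (0,1)$ and $A > 0$ depending only on $n, k, \min f, \|f\|_{C^1}$ such that $|\n v|^2 / (v-m_v)^{\g} \le A M_v^{2-\g}$; in particular $|\n v| \le \sqrt{A}\, M_v$.

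Next I would upgrade to a $C^0$ upper bound following Proposition \ref{C0}. Minkowski's formula \eqref{Min} and the Alexandrov-Fenchel inequality \eqref{AF} applied to $W_v$ give $\int_{\SS^n} v \s_k(W_v) \le C (\int_{\SS^n} \s_k(W_v))^{(k+1)/k}$; using $v \ge u$ on the left and $\s_k(W_v) = u^{p_0} f$ throughout, and then H\"older's inequality together with $p_0 < k$, one obtains the $\ep$-independent integral bound $\int_{\SS^n} u^{p_0+1} f \le C(\int_{\SS^n} f)^{(k+1)/(k-p_0)}$. The main obstacle here is that, unlike in Proposition \ref{C0}, one cannot directly run a Harnack step on $u$, because the gradient estimate is phrased in terms of $v$; this matters only when $\ep$ is comparable to $M_u$. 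I would handle it with a case split, assuming without loss of generality $\ep \in [0,1]$: either $M_u \le 2\ep \le 2$, in which case we are done, or $M_u > 2\ep$, in which case Step~1 gives $v \ge M_v/2$ on $B(x_0, 1/(2\sqrt{A}))$ around a maximum point $x_0$ of $v$, so that $u = v - \ep \ge M_v/2 - \ep \ge M_u/4$ on that ball; inserting this into the integral bound forces $M_u \le C$ uniformly in $\ep$.

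Combining the two steps gives $|\n u| = |\n v| \le \sqrt{A}\, M_v \le \sqrt{A}(M_u + \ep) \le C$, which is the claimed $\ep$-uniform $C^1$ bound. All maximum-principle manipulations, the Alexandrov-Fenchel / Minkowski chain, and the H\"older step carry over verbatim from Section~\ref{admissible}; essentially the only genuinely new ingredient beyond bookkeeping is the case split addressing the regime $\ep \sim M_u$ in Step~2.
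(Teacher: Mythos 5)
Your plan — rewrite \eqref{eq_ep} in terms of $v = u+\ep$, so that $W_u^\ep = W_v$, and then run the arguments of Propositions \ref{gradient} and \ref{C0} on $v$, with the right-hand side $(v-\ep)^{p_0}f$ handled by dropping the positive term $p_0(v-\ep)^{p_0-1}f\,v_1$ and controlling $(v-\ep)^{p_0}=\sigma_k(W_v)/f$ — is exactly the intended content of the paper's one-line remark that ``the arguments in the proof for $C^1$ estimate in previous section yield the $C^1$ estimate.'' The bookkeeping you describe is sound: the gradient test function $\Phi=|\nabla v|^2/(v-m_v)^\gamma$ coincides with the paper's test function since $\nabla v=\nabla u$ and $v-m_v=u-m_u$; the commutation, the bound $W_{v,11}\le(1+\delta)v_{11}$ (which now uses $v\le M_v$ rather than $u\le M_u$), and the steps \eqref{neweq01}--\eqref{110} are pointwise matrix identities in $W_v$ and carry over verbatim. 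The Minkowski/Alexandrov--Fenchel/H\"older chain applied to $W_v$ with $v\ge u$ on the left reproduces \eqref{ieq2}. So the approach is essentially the paper's.

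The one place where you write ``assuming without loss of generality $\ep\in[0,1]$'' is not, as stated, a WLOG: the proposition is asserted for all $\ep\ge 0$, and for $\ep$ comparable to or larger than $M_u$ your Harnack step around the max of $v$ genuinely degrades. Fortunately the reduction to bounded $\ep$ can be justified directly from the equation at the minimum point. If $u\in C^2$ attains its minimum $m_u$ at $x_1$, then $\nabla^2 u(x_1)\ge 0$, so
$$m_u^{p_0} f(x_1) = \sigma_k\bigl(\nabla^2 u + (u+\ep)g_{\SS^n}\bigr)(x_1) \ \ge\ \binom{n}{k}(m_u+\ep)^k.$$
Using $(m_u+\ep)^k\ge m_u^k$ gives $m_u\le\bigl(\max f/\binom{n}{k}\bigr)^{1/(k-p_0)}$, and using $(m_u+\ep)^k\ge\ep^k$ gives $\ep\le\bigl(m_u^{p_0}\max f/\binom{n}{k}\bigr)^{1/k}$. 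Combining, $\ep\le\ep_0(n,k,p_0,\max f)$ for any solution, so in the regime $M_u\le 2\ep$ one immediately gets $M_u\le 2\ep_0$ without any restriction on $\ep$. With that observation your two-case argument closes, and the resulting constant depends only on $n,k,p_0,\min f,\|f\|_{C^1}$ as claimed.
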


Next, we show that, in the case  $\frac{k-1}{2}\le p_0<k$, equation \eqref{eq_ep} admits  a $C^2$ estimate independent of $\ep$.

\begin{proposition}\label{C2} Let $\ep\ge 0$. Assume $\frac{k-1}{2}\le p_0<k$. Let $u$ be a solution to \eqref{eq_ep} with $(\n^2 u+(u+\ep)g_{\SS^n})\in \Gamma_k$. Then there exists a nonnegative constant $\a=\a(p_0,k,n)$ depending only on $p_0, k, n$ with
$\a(p_0,k,n)> 0$ if $\frac{k-1}{2}< p_0$, and there is
some positive constant $C$ depending on $n, k, p_0, \min f$ and $\|f\|_{C^2}$, but independent of $\ep$, such that $$\|\frac{\n^2 u}{u^{\a}}\|_{C^0}\leq C.$$
\end{proposition}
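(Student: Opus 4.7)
The plan is to carry out a maximum principle argument on the test function
\[ \varphi(x) = \log W_{\xi_0\xi_0}(x) - \alpha \log u(x), \]
where $\alpha = \alpha(p_0,k,n) \ge 0$ is a constant to be chosen so that $\alpha = 0$ precisely at $p_0 = (k-1)/2$ and $\alpha > 0$ for $p_0 > (k-1)/2$. The unit direction $\xi_0$ is chosen to realize the maximum of $W_{\xi\xi}/u^\alpha$ over all $x \in \SS^n$ and all unit vectors $\xi$; after a rotation I may assume the maximum is attained at some $x_0$ with $\xi_0 = e_1$, that $W_{ij}(x_0)$ is diagonal, and therefore that $F^{ij} := \partial\sigma_k/\partial W_{ij}$ is diagonal as well. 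Because $\epsilon$ enters only as an additive shift in $W = \nabla^2 u + (u+\epsilon)g_{\SS^n}$, every bound produced for $W_{11}$ will be independent of $\epsilon$; combined with the uniform $C^0$ bound $u \le C$ and $\nabla^2 u = W - (u+\epsilon)g_{\SS^n}$, this gives the desired estimate.

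At $x_0$ the critical equation reads $W_{11,i} = \alpha W_{11}u_i/u$, and the second-order condition, contracted with $F^{ii}$, becomes
\[ 0 \ge \frac{F^{ii}W_{11,ii}}{W_{11}} - \alpha \frac{F^{ii}u_{ii}}{u} + \alpha(1-\alpha)\frac{F^{ii}u_i^2}{u^2}. \]
To evaluate $F^{ii}W_{11,ii}$ I differentiate the equation $\sigma_k(W) = u^{p_0}f$ twice along $e_1$, giving
\[ F^{ii}W_{ii,11} + \sigma_k^{ij,pq}W_{ij,1}W_{pq,1} = (u^{p_0}f)_{11}, \]
and then apply the Ricci commutation formula on $\SS^n$ to exchange $W_{11,ii}$ with $W_{ii,11}$; this introduces a correction of the form $W_{11}\sigma_1(F) - k\sigma_k$.

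The concavity term is controlled by Lemma \ref{gll-lemma}, which produces a quadratic lower bound in $\nabla_1\sigma_k/\sigma_k$ and $\nabla_1\sigma_1/\sigma_1$. Replacing $\nabla_1\sigma_k$ by $p_0 u^{p_0-1}u_1 f + u^{p_0}f_1$ from the once-differentiated equation and using the critical relation to rewrite $W_{11,1}$ inside $\nabla_1\sigma_1$, every second-order quantity gets reduced to expressions in $u, u_1, u_{11}, W_{11}, \sigma_k$ and $\sigma_1(F)$. After dividing by $\sigma_k/u$ and invoking the Euler identity $F^{ii}W_{ii} = k\sigma_k$, I expect to arrive at an inequality of the schematic form
\[ 0 \ge Q(p_0,k,n,\alpha)\frac{W_{11}}{u} - C\Bigl(1 + \frac{W_{11}^{1/2}}{u^{1/2}}\Bigr), \]
in which $Q$ is a rational expression in $\alpha, p_0, (k-1)^{-1}$ and $n$. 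Elementary analysis of $Q$, in the spirit of the $L^p$-Minkowski computation carried out in \cite{GL}, will show that $Q$ can be made strictly positive precisely when $p_0 \ge (k-1)/2$, with an admissible $\alpha > 0$ available whenever $p_0 > (k-1)/2$ and $\alpha = 0$ forced at $p_0 = (k-1)/2$. The resulting bound $W_{11} \le C u^\alpha$ then yields $\|\nabla^2 u/u^\alpha\|_{C^0} \le C$ with constants independent of $\epsilon$, as Proposition \ref{C1 p} supplies the required $C^1$ control.

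The principal difficulty is the final algebraic balancing. The Ricci commutator contributes $W_{11}\sigma_1(F)$ with a favourable sign, but the term $-\alpha F^{ii}u_{ii}/u$, after decomposing $F^{ii}u_{ii} = k\sigma_k - (u+\epsilon)\sigma_1(F)$, returns a contribution $-\alpha k\sigma_k/u$ working against us; meanwhile Lemma \ref{gll-lemma} delivers both a gain and a loss in the ratio $\nabla_1\sigma_k/\sigma_k$. Reconciling these competing contributions so that the net coefficient $Q$ of $W_{11}/u$ is strictly positive is exactly what pins down the threshold $p_0 \ge (k-1)/2$ and determines the explicit dependence of $\alpha$ on $p_0$, $k$, $n$.
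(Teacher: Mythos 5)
Your strategy shares the paper's main ingredients (a Pogorelov-type test function, Ricci commutation, and Lemma~\ref{gll-lemma} for the concavity term), but the specific choice of test function, $W_{\xi_0\xi_0}/u^\alpha$ rather than $\sigma_1(W_u^\ep)/u^\alpha$, breaks the argument at exactly the step where Lemma~\ref{gll-lemma} is invoked. That lemma bounds $-\sigma_k^{ij,lm}\nabla_1 W_{ij}\nabla_1 W_{lm}$ from below in terms of the two scalars $a=\nabla_1\sigma_k/\sigma_k$ and $b=\nabla_1\sigma_1/\sigma_1$. You obtain $a$ by differentiating the equation once, but $b=\sum_i W_{ii,1}/\sigma_1$ is where the plan fails: the critical condition $\nabla_s W_{11}=\alpha W_{11}u_s/u$ determines only the single summand $W_{11,1}$ of $\nabla_1\sigma_1$, and nothing fixes $W_{22,1},\dots,W_{nn,1}$, so you cannot ``rewrite $W_{11,1}$ inside $\nabla_1\sigma_1$'' to reduce $b$ to the quantities you list. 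This is precisely why the paper uses $\sigma_1(W_u^\ep)/u^\alpha$: its first-order condition reads $\nabla_s\sigma_1=\alpha\sigma_1 u_s/u$, which is exactly what must be substituted into Lemma~\ref{gll-lemma} to obtain the coefficient $\beta=(p_0-\alpha)\tfrac{(k-2)p_0+k\alpha}{k-1}$ of \eqref{equ2}. If one instead bounds the right-hand side of the lemma by its minimum over the free variable $b$, the best one can say is $-\sigma_k^{ij,lm}\nabla_1W_{ij}\nabla_1W_{lm}\ge -\tfrac{k-1}{k}\tfrac{(\nabla_1\sigma_k)^2}{\sigma_k}$, which at $\alpha=0$ replaces $\beta=\tfrac{(k-2)p_0^2}{k-1}$ by the larger $\tfrac{(k-1)p_0^2}{k}$; carrying this through the final balancing yields the threshold $p_0\ge k/2$ rather than $p_0\ge(k-1)/2$, so the proposition as stated is not reached.

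A secondary gap: the claim that Proposition~\ref{C1 p} ``supplies the required $C^1$ control'' undersells what is actually needed. After the first differentiation the inequality contains a term of the form $u^{p_0-2}|\nabla u|^2 f/\sigma_1$, and a uniform bound on $\|\nabla u\|_{L^\infty}$ does not control $|\nabla u|^2/u$ near $\{u=0\}$. The paper handles this through the auxiliary inequality \eqref{maxi}: by examining the first-order condition at the maximum of $|\nabla u|^2/u^{1+\alpha}$ and observing that $\tfrac{(1+\alpha)|\nabla u|^2}{2u^{1+\alpha}}+\tfrac{u+\ep}{u^\alpha}$ is then an eigenvalue of $W_u^\ep/u^\alpha$, one deduces $\max\tfrac{(1+\alpha)|\nabla u|^2}{2u^{1+\alpha}}\le\max\tfrac{\sigma_1(W_u^\ep)}{u^\alpha}$, and this is what converts the troublesome gradient term into one of the $u^{p_0-1}f$ terms that the coefficient analysis absorbs. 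This Harnack-type step is compatible with either test function (the eigenvalue is also bounded by $\lambda_{\max}(W_u^\ep)/u^\alpha$), but it has to appear explicitly in the proof; it is not a consequence of the $C^1$ bound alone.
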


\begin{proof} For $k=1$, the standard theory of linear elliptic PDE gives us the  $C^2$ estimate. Hence we consider $k\geq 2$.
In the following proof we denote by $W_u^\ep=\n^2 u+(u+\ep)I$. It is sufficient to prove the upper bound of $\frac{\s_1(W_u^\ep)}{u^{\a}}$ since $W_u^\ep\in \Gamma_2$.

Let $y_0\in \SS^n$ be a maximum point of  $\frac{|\nabla u|^2}{u^{1+\a}}$. Then $$\n |\nabla u|^2(y_0)=(1+\a)|\n u|^2\frac{\n u}{u}(y_0).$$ It follows that $$\frac{(\n^2 u+(u+\ep)I)}{u^{\a}}\cdot\n u(y_0)=(\frac{(1+\a)|\n u|^2}{2u^{1+\a}}+\frac{(u+\ep)}{u^{\a}})\n u(y_0).$$ Thus $\left(\frac{(1+\a)|\n u|^2}{2u^{1+\a}}+\frac{(u+\ep)}{u^{\a}}\right) (y_0)$ is an eigenvalue of $\frac{(\n^2 u+(u+\ep)I)}{u^{\a}}(y_0)$. Since $W_u^\ep\in \Gamma_2$, we have \begin{eqnarray}\label{maxi}
\max \frac{(1+\a)|\n u|^2}{2u^{1+\a}}&=&\frac{(1+\a)|\n u|^2}{2u^{1+\a}}(y_0) \nonumber \\
&\leq & \left(\frac{(1+\a)|\n u|^2}{2u^{1+\a}}+\frac{(u+\ep)}{u^{\a}}\right) (y_0)\nonumber \\ &\leq & \frac{\sigma_1 (W_u^\ep)}{u^{\a}}(y_0)\leq \max  \frac{\sigma_1 (W_u^\ep)}{u^{\a}}.
\end{eqnarray}

Let $x_0$ be a maximum point of $\frac{\sigma_1 (W_u^\ep)}{u^{\a}}$ and by a choice of local frame and a rotation of coordinates we assume $g_{ij}=\d_{ij}$ and $W_u^\ep$ is diagonal at $x_0$. By the maximal condition, at $x_0$,  $$\n \sigma_1=\a\sigma_1\frac{\n u}{u},\quad \frac{\n^2\sigma_1}{\sigma_1}-\frac{\n^2 u^{\a}}{u^{\a}}\leq 0.$$
In the following we compute at $x_0$. Assuming $0\le \a\le 1$, and using Ricci's identity,
\begin{eqnarray}\label{second}
0&\geq &\sigma_k^{ii}[\frac{(\sigma_1)_{ii}}{\sigma_1}-\frac{\a u_{ii}}{u}-\a(\a-1)\frac{u_i^2}{u^2}]\nonumber\\&\ge&\sigma_k^{ii}\frac{(W^\ep_{iiss}+W^\ep_{ss}
-nW^\ep_{ii})}{\sigma_1}-\a k\frac{\sigma_k}{u}+(n+1-k)\a \sigma_{k-1}\nonumber\\&=& \frac{\Delta \sigma_k-\sigma_k^{ij,lm}W^\ep_{ijs}W^\ep_{lms}-nk\s_k}{\sigma_1}-\a k\frac{\sigma_k}{u}+(n+1-k)(1+\a) \sigma_{k-1}.
\end{eqnarray}

From the equation \eqref{eq_ep}, we have
\begin{eqnarray}\label{equ1}
\Delta \s_k&=&p_0 u^{p_0-1}\Delta u f+p_0(p_0-1)u^{p_0-2}|\n u|^2 f\\&&+2p_0u^{p_0-1}\n u\n f+u^{p_0} \Delta f\nonumber
\\&=&p_0u^{p_0-1}\sigma_1 f+p_0(p_0-1)u^{p_0-2}|\n u|^2 f\nonumber\\&&+2p_0u^{p_0-1}\n u\n f+(1-np_0)u^{p_0} \Delta f.\nonumber
\end{eqnarray}
Since $\n \sigma_1=\a\sigma_1\frac{\n u}{u}$,  $\sigma_k=u^{p_0}f$ and  $\nabla \sigma_k=p_0u^{p_0-1}f\nabla u+u^{p_0}\n f$, we deduce from \eqref{concavity} in Lemma \ref{gll-lemma} that
\begin{eqnarray}\label{equ2}
-\sigma_k^{ij,lm}W^\ep_{ijs}W^\ep_{lms}
\geq -\beta u^{p_0-2}|\n u|^2 f+c_1 u^{p_0-1}\n u\n f+c_2u^{p_0} \frac{|\n f|^2}{f},
\end{eqnarray}
where $c_1, c_2$ are constants under control and
\begin{equation}\label{beta}
\beta=(p_0-\a)\frac{(k-2)p_0+k\a}{k-1}.\end{equation}
It follows from \eqref{equ1} and \eqref{equ2} that
\begin{eqnarray}\label{equ3}
&&\Delta \s_k-\sigma_k^{ij,lm}W^\ep_{ijs}W^\ep_{lms}\nonumber\\&\geq &
p_0 u^{p_0-1}\sigma_1 f+\left[p_0(p_0-1)-\beta\right] u^{p_0-2}|\n u|^2 f\nonumber\\&&+c_1u^{p_0-1}\n u\n f-C u^{p_0}
\end{eqnarray}
where $C$ depends on $k, p_0, n, \|f\|_{C^2}$ and $\min f$.

By (\ref{second}) and (\ref{equ3}),
\begin{eqnarray}\label{equ00}  0&\ge& (p_0-k\a) u^{p_0-1} f+ \frac{\left[p_0(p_0-1)-\beta\right] u^{p_0-2}|\n u|^2 f+(c_1+2p_0)u^{p_0-1}\n u\n f-C u^{p_0}}{\sigma_1}\nonumber \\ &&+(n+1-k)(1+\a) \sigma_{k-1}\end{eqnarray}

Note that from \eqref{maxi} we have
\begin{eqnarray}\label{equ44}
\frac{\s_1(W_u^\ep)}{u^{\a}}(x_0)=\max \frac{\s_1(W_u^\ep)}{u^{\a}} \geq \frac{(1+\a)|\nabla u|^2}{2u^{1+\a}}(x_0).
\end{eqnarray}

As $u$ is bounded from above, we deduce from \eqref{equ00} and \eqref{equ44} that
\begin{eqnarray}\label{equ5}
0 &\geq&
\min\left\{(p_0-k\a)+\frac{2(p_0(p_0-1)-\beta)}{1+\a}, p_0-k\a\right\} u^{p_0-1} f\\&&-C \frac{u^{p_0-\frac12}}{\sqrt{\s_1}}-C\frac{u^{p_0}}{\s_1}+(n+1-k)(1+\a) \sigma_{k-1}.\nonumber
\end{eqnarray}
In view of (\ref{beta}), if $p_0\ge \frac{k-1}{2}$, we may choose $\a\ge 0$ such that $p_0-k\a\ge 0$ and \begin{eqnarray*}
&&(p_0-k\a)+\frac{2(p_0(p_0-1)-\beta)}{1+\a}\\&=&\frac{1}{1+\a}\left(\frac{2}{k-1}p_0^2-p_0+\frac{k-5}{k-1}\a p_0-k\a(1+\a)+\frac{2k}{k-1}\a^2\right)\geq 0,
\end{eqnarray*}
 Moreover, if $p_0> \frac{k-1}{2}$, $\a$ can be picked positive.
By the Newton-Maclaurin inequality $$\s_{k-1}\geq C_{n,k}\s_1^{\frac{1}{k-1}}\s_k^{\frac{k-2}{k-1}},$$ it follows from (\ref{equ5}),
\begin{eqnarray}\label{second2}
0 &\ge & (n+1-k)(1+\a)\sigma_1 \sigma_{k-1}-C u^{p_0-\frac12}\sqrt{\s_1}-Cu^{p_0}\nonumber \\
&\geq &C\sigma_1^{1+\frac{1}{k-1}}(u^{p_0}f)^{\frac{k-2}{k-1}}-Cu^{p_0-\frac12}\sqrt{\s_1}-Cu^{p_0}.
\end{eqnarray}
Since $p_0\geq \frac{k-1}{2}$, we can choose $\a\ge 0$ such that  $p_0\frac{k-2}{k-1}\leq p_0-\frac12-(\frac{1}{k-1}+\frac12)\a.$ Moreover, if $p_0> \frac{k-1}{2}$, $\a$ can be picked positive. By virtue of  the uniform upper bound of $u$,  we obtain $\frac{\s_1}{u^\a}\leq C$. The proof is completed.

\end{proof}

\medskip

Now we are ready to prove Theorem \ref{thm4'}.

\noindent{\it Proof of Theorem \ref{thm4'}:} For $\ep>0$, let $u_\ep$ be the solution of
\eqref{eq_ep} with $(\n^2 u_\ep+ (u_\ep+\ep)g_{\SS^n})\in \Gamma_k$. From the a priori $C^2$ estimate independent of $\ep$, there is a subsequence $u_{\ep_i}\to u$ in $C^{1,\a}$ for any $\a<1$. The bound gives $u\in C^{1,1}(\SS^n)$ and $\s_k(\n^2 u+ug_{\SS^n})=u^{p_0}f$ with $(\n^2 u+ u g_{\SS^n})\in \bar \Gamma_k$.

We note that solution $u$  is  $C^2$ continuous if $p_0>\frac{k-1}2$. This follows from Proposition \ref{C2}, $|\nabla^2u(x)|\le Cu^{\a}(x), \forall x\in \mathbb S^n$, because $u\in C^{\infty}$ away from the null set $\{u=0\}$ and $\nabla^2u$ is continuous at every point of $\{u=0\}$ when $\a>0$.\qed

\medskip

We discuss a special case of equation (\ref{CMpk}) when $k=1$. This is the equation corresponding to the $L^p$-Christoffel problem. In this case, equation is semilinear:
\begin{equation}\label{e-c1}
\Delta u(x)+nu(x)=u^{p_0}(x)f(x), \quad x\in \mathbb S^n.\end{equation}
From the $C^1$ estimate established for admissible solutions of equation (\ref{eq_ep})
in the previous section and standard semilinear elliptic theory, we immediately have
\begin{theorem}\label{thmk1}
For any positive function $f\in C^2(\SS^n)$ there exists a nonnegative solution $u$ to \eqref{e-c1}  with
\begin{eqnarray*}
\|u\|_{C^{2,\a}(\SS^n)}\leq C,
\end{eqnarray*}
for some $0<\a<1$ and $C$ depending on $n, k, p_0, \a, \|f\|_{C^2(\SS^n)}$ and $\min_{\SS^n} f$.
 \end{theorem}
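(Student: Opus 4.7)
The plan is to specialize the $\ep$-perturbation scheme of Section 6 to $k=1$ and exploit that \eqref{e-c1} is semilinear with uniformly elliptic principal part $\Delta$, so that $C^{2,\alpha}$ regularity reduces to classical Schauder theory once one has uniform $C^1$ control on an approximating family.

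For each $\ep>0$, Proposition \ref{est_eqep} applied with $k=1$ yields a classical positive solution $u_\ep\in C^{5,\alpha}(\mathbb{S}^n)$ of
\[
\Delta u_\ep + n(u_\ep+\ep) = u_\ep^{p_0}f,
\]
and Proposition \ref{C1 p} gives
\[
\|u_\ep\|_{C^1(\mathbb{S}^n)}\le C
\]
with $C$ depending only on $n, p_0, \min f$, and $\|f\|_{C^1}$, independent of $\ep$. (In particular this includes a uniform upper bound on $u_\ep$, obtained at a maximum point of $u_\ep$ from the elementary inequality $n(M_\ep+\ep)\ge M_\ep^{p_0}\min f$ together with $p_0<1$.) The family $\{u_\ep\}$ is therefore precompact in $C^{0,\alpha}(\mathbb{S}^n)$ for every $\alpha<1$; a subsequence converges to a nonnegative $u\in C^{0,1}(\mathbb{S}^n)$ that solves \eqref{e-c1} distributionally.

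Because $p_0\in(0,1)$ and $u\ge 0$, the elementary bound $|a^{p_0}-b^{p_0}|\le|a-b|^{p_0}$ for $a,b\ge 0$ implies $u^{p_0}\in C^{0,p_0}(\mathbb{S}^n)$, and hence the right-hand side of
\[
\Delta u = u^{p_0}f - nu
\]
lies in $C^{0,p_0}(\mathbb{S}^n)$. Standard Schauder estimates for the Laplacian on $\mathbb{S}^n$ upgrade $u$ to $C^{2,p_0}(\mathbb{S}^n)$, with the quantitative bound controlled by $n, p_0, \|f\|_{C^2}$, and $\min f$; this yields the asserted estimate with $\alpha=p_0$.

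The only point requiring a brief check is the Hölder behavior of $u^{p_0}$ at points where $u$ may vanish, but the pointwise inequality above holds for all nonnegative $a,b$ with no lower bound required, so no regularity is lost on the zero set $\{u=0\}$. I therefore expect no substantive obstacle beyond assembling the uniform estimates of Section 6 and invoking classical linear elliptic theory.
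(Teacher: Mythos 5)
Your argument follows the same route the paper indicates: specialize the $\ep$-perturbation of Section 6 to $k=1$, use the $\ep$-independent $C^1$ bound of Proposition \ref{C1 p}, pass to a limit, and bootstrap to $C^{2,p_0}$ via the elementary H\"older bound $|a^{p_0}-b^{p_0}|\le|a-b|^{p_0}$ together with Schauder. This is precisely what the paper's one-line proof (``the $C^1$ estimate ... and standard semilinear elliptic theory'') unpacks to, and your assembly is sound.

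However, your parenthetical explanation of the $C^0$ upper bound is incorrect and signals a misunderstanding of a nontrivial step. At a maximum point of $u_\ep$ the inequality $n(M_\ep+\ep)\ge M_\ep^{p_0}\min f$ does \emph{not} bound $M_\ep$ from above when $p_0<1$: the left side grows like $M_\ep$ while the right side grows only like $M_\ep^{p_0}$, so the inequality is consistent for all large $M_\ep$. (The maximum-point trick works only when $p_0>1$; in the present regime $p_0<1$, and more generally $p_0<k$, the maximum-point inequality gives only the \emph{lower} bound on $\max u$ recorded in \eqref{maxlb}.) The genuine upper bound in the range $0<p_0<k$ is Proposition \ref{C0}, and it is obtained from the refined gradient estimate of Proposition \ref{gradient}, the resulting rough Harnack bound \eqref{harnack}, the Minkowski integral formula \eqref{Min}, the Alexandrov--Fenchel inequality \eqref{AF}, and H\"older's inequality; that is what underlies Proposition \ref{C1 p}. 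Since you in fact invoke Proposition \ref{C1 p} correctly, your proof survives the removal of the faulty aside, but you should not present the $C^0$ bound as an easy maximum-principle consequence in this degenerate range.
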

If condition (\ref{convexity}) is imposed, one may obtain a sperically convex solution $u$. Though $u\in C^{2,\alpha}$, the corresponding hypersurface with $u$ as its support function may not be in $C^{1,1}$ as $W_u$ may degenerate
on the null set of $u$. Equation (\ref{e-c1}) has variational structure, it is of interest to develop corresponding potential theory as in the classical Christoffel problem \cite{B,F}.

\medskip

To end this paper, we would like to raise the following two questions.

\begin{itemize}
\item[(1)] Using compactness argument as in \cite{GM0}, together with the a priori estimates in Proposition \ref{estimate} and the Constant Rank Theorem \ref{const rank}, one can prove that if $\|f\|_{C^2}+\|\frac1f\|_{C^0}\le M$ and \eqref{convexity} holds, there exists a uniform positive constant $C$ depending only on $n, M$ such that $$W_u\ge C g_{\mathbb{S}^n}.$$
Is there a direct effective estimate of $W_u$ from below under the same convexity conditions, without use of  the constant rank theorem?

\item[(2)] Under the condition of evenness, a positive lower bound of $u$ in Proposition \ref{lowerbound} has been derived via an ODE argument and a bound on $\nabla u$ which depends on $\nabla f$. In the case of $L^p$-Minkowski problem (i.e., $k=n$), one may obtain a bound of volume of the associated convex body $\Omega_u$ from below if $f$ is positive. Is it possible to derive such a priori a positive lower bound of $Vol(\Omega_u)$ for solutions of equation (\ref{CMpk}) in general? This would give a positive lower bound of $u$.
\end{itemize}

\end{document}